\theoremstyle{plain}
\newtheorem{theorem}{Theorem}[section]
\newtheorem{lemma}[theorem]{Lemma}
\newtheorem{corollary}[theorem]{Corollary}
\theoremstyle{definition}
\newtheorem{definition}[theorem]{Definition}
\newtheorem{condition}[theorem]{Condition}
\newtheorem{remark}[theorem]{Remark}
\newcommand{\sig}{\sigma}
\newcommand{\E}{\mathbb{E}}
\newcommand{\bP}{\mathbf{P}}
\newcommand{\NN}{\mathcal{N}}
\newcommand{\slr}{\mathsf{SLR}}
\newcommand{\Q}{\mathsf{Q}}
\newcommand{\indep}{\perp}
\newcommand{\Bz}[1]{{B_0(#1)}}
\newcommand{\betas}{\beta^\ast}
\newcommand{\X}{\mathbb{X}}
\newcommand{\bX}{\mathbf{X}}
\newcommand{\betahat}{\widehat{\beta}}
\newcommand{\Real}{\mathbb{R}}
\title{Sparse PCA from Sparse Linear Regression}
\author{
  Guy Bresler \\
  MIT \\
  \texttt{guy@mit.edu} \\
  \And
  Sung Min Park \\
  MIT \\
  \texttt{sp765@mit.edu} \\
  \And
  M\u{a}d\u{a}lina Persu \\
  Two Sigma\thanks{The views expressed herein are solely the views of the author(s) and are not necessarily the views of Two Sigma Investments, LP or any of its affiliates.  They are not intended to provide, and should not be relied upon for, investment advice.}, MIT \\
  \texttt{mpersu@mit.edu} \\
}
\begin{document}

\maketitle

\begin{abstract}
Sparse Principal Component Analysis (SPCA) and Sparse Linear Regression (SLR) have a wide range of applications and have attracted a tremendous amount of attention in the last two decades as canonical examples of statistical problems in high dimension. A variety of algorithms have been proposed for both SPCA and SLR, but an explicit connection between the two had not been made. We show how to efficiently transform a black-box solver for SLR into an algorithm for SPCA: assuming the SLR solver satisfies prediction error guarantees achieved by existing efficient algorithms such as those based on the Lasso, the SPCA algorithm derived from it achieves near state of the art guarantees for testing and for support recovery for the single spiked covariance model as obtained by the current best polynomial-time algorithms. Our reduction not only highlights the inherent similarity between the two problems, but also, from a practical standpoint, allows one to obtain a collection of algorithms for SPCA directly from known algorithms for SLR. We provide experimental results on simulated data comparing our proposed framework to other algorithms for SPCA.
\end{abstract}

\section{Introduction}

Principal component analysis (PCA) is a fundamental technique for dimension reduction used widely in data analysis.
PCA projects data along a few directions that explain most of the variance of observed data. One can also view this as linearly transforming the original set of variables into a (smaller) set of uncorrelated variables called principal components. 

Recent work in high-dimensional statistics has focused on sparse principal component analysis (SPCA), as ordinary PCA estimates become inconsistent in this regime \cite{johnstone2012consistency}. In SPCA, we restrict the principal components to be sparse, meaning they have only a few nonzero entries in the original basis. This has the advantage, among others, that the components are more interpretable \cite{jolliffe2003modified, zou2006sparse}, while components may no longer be uncorrelated. We study SPCA under the Gaussian $\emph{(single) spiked covariance model}$ introduced by \cite{johnstone2001distribution}: we observe $n$ samples of a random variable $X$ distributed according to a Gaussian distribution $\NN(0, I_d+\theta uu^\top)$, where $||u||_2=1$ with at most $k$ nonzero entries,\footnote{Sometimes we will write this latter condition as $u \in B_0(k)$ where $B_0(k)$ is the $``\ell_0$-ball'' of radius $k$.} $I_d$ is the $d\times d$ identity matrix, and $\theta$ is the signal-to-noise parameter. 
We study two settings of the problem, hypothesis testing and support recovery.

Sparsity assumptions have played an important role in a variety of other problems in high-dimensional statistics, in particular linear regression. Linear regression is also ill-posed in high dimensions, so via imposing sparsity on the regression vector we recover tractability.


Though the literature on two problems are largely disjoint, there is a striking similarity between the two problems, in particular when we consider statistical and computational trade-offs. The natural information-theoretically optimal algorithm for SPCA \cite{berthet2013optimal} involves searching over all possible 
supports of the hidden spike. This bears resemblance to the minimax optimal algorithm for SLR \cite{raskutti2011minimax}, which optimizes over all sparse supports of the regression vector. Both problems appear to exhibit gaps between statistically optimal algorithms and known computationally efficient algorithms, and conditioned on relatively standard complexity assumptions, these gaps seem irremovable \cite{berthet2013complexity, wang2016statistical, zhang2014lower}.

\subsection{Our contributions}

In this paper we give algorithmic evidence that this similarity is likely not a coincidence. Specifically, we give a simple, general, and efficient procedure for transforming a black-box solver for sparse linear regression to an algorithm for SPCA. At a high level, our algorithm tries to predict each coordinate\footnote{From here on, we will use ``coordinate'' and ``variable'' interchangeably.} linearly from the rest of the coordinates using a black-box algorithm for SLR. The advantages of such a black-box framework are two fold: theoretically, it highlights a structural connection between the two problems;
practically, it allows one to simply plug in any of the vast majority of solvers available for SLR and directly get an algorithm for SPCA with provable guarantees. In particular,

\begin{itemize}
\item For hypothesis testing: we match state of the art provable guarantee for computationally efficient algorithms; our algorithm successfully distinguishes between isotropic and spiked Gaussian distributions at signal strength $\theta \gtrsim \sqrt{\frac{k^2 \log d}{n}}$. This matches the phase transition of diagonal thresholding \cite{johnstone2012consistency} and Minimal Dual Perturbation \cite{berthet2013optimal} up to constant factors.

\item For support recovery: for general $p$ and $n$, when each non-zero entry of $u$ is at least $\Omega(1/\sqrt{k})$ (a standard assumption in the literature), our algorithm succeeds with high probability for signal strength $\theta \gtrsim \sqrt{\frac{k^2 \log d}{n}}$, which is nearly optimal.\footnote{In the scaling limit $d/n \rightarrow \alpha$ as $d,n \rightarrow \infty$, the covariance thresholding algorithm \cite{deshpande2014sparse} theoretically succeeds at a signal strength that is an order of $\sqrt{\log d}$ smaller. However, our experimental results indicate that with an appropriate choice of black-box, our $\Q$ algorithm outperforms covariance thresholding}

\item In experiments, we demonstrate that using popular existing SLR algorithms as our black-box results in reasonable performance.

\item We theoretically and empirically illustrate that our SPCA algorithm is also robust to \emph{rescaling} of the data, for instance by using a Pearson correlation matrix instead of a covariance matrix.\footnote{Solving SPCA based on the correlation matrix was suggested in a few earlier works \cite{zou2006sparse, vu2013fantope}.} Many iterative methods rely on initialization via first running diagonal thresholding, which filters variables with higher variance; rescaling renders diganoal thresholding useless, so in some sense our framework is more robust.
\end{itemize}

\section{Preliminaries}
\label{sec:prelim}


\subsection{Problem formulation for SPCA} 

\textbf{Hypothesis testing} 
Here, we want to distinguish whether $X$ is distributed according to an isotropic Gaussian or a spiked covariance model. That is, our null and alternate hypotheses are:
\begin{align*}
H_0: X \sim \NN(0, I_d) \text{ and } H_1: X \sim \NN(0, I_d+\theta uu^\top), 
\end{align*}

Our goal is to design a test $\psi\colon \Real^{n \times d} \rightarrow \{0,1\}$ that discriminates  $H_0$ and $H_1$. More precisely,
we say that $\psi$ discriminates between $H_0$ and $H_1$ with probability $1-\delta$ if both type I and II errors have a probability smaller than $\delta$:
\[ \bP_{H_0}(\psi(X) = 1)\leq \delta \text{ and } \bP_{H_1}(\psi(X) = 0)\leq \delta.\]

We assume the following additional condition on the spike $u$:
\begin{enumerate}[label=(C\arabic*)]
    \item\label{cond1} $\nicefrac{c_{min}^2}{k} \le u_i^2 \le 1-\nicefrac{c_{min}^2}{k}$ for at least one $i \in [d]$ where $c_{min} > 0$ is some constant.   
\end{enumerate}

The above condition says that at least one coordinate has enough mass, yet the mass is not entirely concentrated on just that singlecoordinate.
Trivially, we always have at least one $i\in[d]$ s.t. $u_i^2 \ge \nicefrac{1}{k}$, but this is not enough for our regression setup, since we want at least one other coordinate $j$ to have sufficient correlation with coordinate $i$. 
We remark that the above condition is a very mild technical condition. If it were violated, almost all of the mass of $u$ is on a single coordinate, so a simple procedure for testing the variance (which is akin to diagonal thresholding) would suffice.

\textbf{Support recovery}
The goal of support recovery is to identify the support of $u$ from our samples $\bX$. More precisely, we say that a support recovery algorithm succeeds if the recovered support $\widehat{S}$ is the same as $S$, the true support of $u$. As standard in the literature \cite{amini2009high,meinshausen2006high}, we need to assume a minimal bound on the size of the entries of $u$ in the support.

 For our support recovery algorithm, we will assume the following condition (note that it implies Condition \ref{cond1} and is much stronger):
\begin{enumerate}[label=(C\arabic*)]
\setcounter{enumi}{1}
    \item\label{cond2} $|u_i| \ge c_{min}/\sqrt{k}$ for some constant $0 < c_{min} < 1$ $\forall i \in [d]$
\end{enumerate}

Though the settings are a bit different, this minimal bound along with our results are  consistent with lower bounds known for sparse recovery. These lower bounds (\cite{fletcher2009necessary, wainwright2007information}; bound of \cite{fletcher2009necessary} is a factor of $k$ weaker) imply that the number of samples must grow roughly as $n \gtrsim (\nicefrac{1}{u_{min}^2}) k \log d$ where $u_{min}$ is the smallest entry of our signal $u$ normalized by $1/\sqrt{k}$, which is qualitativley the same threhold required by our theorems.


\subsection{Background on SLR}
In linear regression, we observe a response vector $y \in \Real^n$ and a design matrix $\X \in \Real^{n\times d}$ that are linked by the linear model $y = \X\betas + w$, where  $w \in \Real^n$ is some form of observation noise, typically with i.i.d. $\NN(0,\sigma^2)$ entries. Our goal is to recover $\betas$ given noisy observations $y$.
While the matrices $\X$ we consider arise from a (random) correlated design (as analyzed in \cite{wainwright2007information}, \cite{wainwright2009sharp}), it will make no difference to assume the matrices are deterministic by conditioning, as long as the distribution of the design matrix and noise are independent, which we will demonstrate in our case.  Most of the relevant results on sparse linear regression pertain to deterministic design. 

In sparse linear regression, we additionally assume that $\betas$ has only $k$ non-zero entries, where $k \ll d$. This makes the problem well posed in the high-dimensional setting. Commonly used performance measures for SLR are tailored to prediction error ($\nicefrac{1}{n}\|\X\betas - \X\betahat\|_2^2$ where $\betahat$ is our guess), support recovery (recovering support of $\betas$), or parameter estimation (minimizing $\|\betas - \betahat\|$ under some norm). We focus on prediction error, analyzed over random realizations of the noise. There is a large amount of work on SLR and we defer a more in-depth overview to Appendix \ref{app:slr}.

Most efficient methods for SLR impose certain conditions on $\X$. We focus on the \emph{restricted eigenvalue} condition, which roughly stated makes the prediction loss strongly convex near the optimum:
\begin{definition}[Restricted eigenvalue \cite{zhang2014lower}]
\label{def:re}
First define the cone $\mathbb{C}(S) = \{\beta \in \Real^d \; | \; \|\beta_{S^c}\|_1 \le 3 \| \beta_S \|_1 \}$,
where $S^c$ denotes the complement, $\beta_T$ is $\beta$ restricted to the subset $T$.
The \emph{restricted eigenvalue} (RE) constant of $\X$, denoted $\gamma(\X)$, is defined as the largest constant $\gamma > 0$ s.t.
  \[\nicefrac{1}{n}\|\X\beta\|_2^2 \ge \gamma \|\beta\|_2^2 \;\;\;\; \text{for all $\beta \in \bigcup_{|S|=k, S \subseteq [d]} \mathbb{C}(S)$} \]
\end{definition}
For more discussion on the restricted eigenvalue, see Appendix \ref{app:slr}.

\textbf{Black-box condition}
Given the known guarantees on SLR, we define a condition that is natural to require on the guarantee of our SLR black-box, which is invoked as $\slr(y,\X,k)$.
\begin{condition}[Black-box condition]
\label{blackcond}
Let $\gamma(\X)$ denote the restricted eigenvalue of $\X$.
There are universal constants $c, c', c''$ such that
$\slr(y,\X,k)$ outputs $\betahat$ that is $k$-sparse and satisfies:
\[\frac{1}{n} \|\X\betahat - \X\betas\|_2^2 \le \frac{c}{\gamma(\X)^2}  \frac{(\sigma^2 k \log d)}{n}  \;\;\;\;\; \text{$\forall\betas \in \Bz{k}$ } \text{w.p. $\ge 1 - c'\exp(-c''k\log d)$}\]
\end{condition}

\section{Algorithms and main results}
\label{sec:algo}

We first discuss how to view samples from the spiked covariance model in terms of a linear model. We then give some intuition motivating our statistic. Finally, we state our algorithms and main theorems, and give a high-level sketch of the proof.

\subsection{The linear model}
\label{sec:linearmodel}
Let $X^{(1)},X^{(2)},\ldots,X^{(n)}$ be $n$ i.i.d. samples from the spiked covariance model; denote as $\bX \in \Real^{n \times d}$ the matrix whose rows are $X^{(i)}$. Intuitively, if variable $i$ is contained in the support of the spike, then the rest of the support should allow to provide a nontrivial prediction for $\bX_i$ since variables in the support are correlated. Conversely, for $i$ not in the support (or under the  isotropic null hypothesis), all of the variables are independent and other variables are useless for predicting $\bX_i$.  So we regress $\bX_i$ onto the rest of the variables.

Let $\bX_{-i}$ denote the matrix of samples in the SPCA model with the $i$th column removed.  For each column $i$, we can view our data as coming from a linear model with design matrix $\X =\bX_{-i}$ and the response variable $y= \bX_i$.


The ``true'' regression vector depends on $i$. Under the alternate hypothesis $H_1$, if $i \in S$, 
we can write $y = \X \betas + w$ where $\betas = \frac{\theta u_i}{1+(1-u_i^2)\theta} u_{-i}$
 and $w \sim \NN(0,\sig^2)$ with $\sig^2=1 + \frac{\theta u_i^2}{1 + (1-u_i^2)\theta}$.\footnote{By the theory of linear minimum mean-square-error (LMMSE) confirms that this choice of $\betas$ minimizes the error $\sig^2$. See Appendix \ref{app:lmmse}, \ref{app:calc} for details of this calculation.}
If $i \not\in S$, and for any $i \in [d]$ under the null hypothesis, $y = w$ where $w = \bX_i \sim \NN(0,1)$ (implicitly $\betas = \mathbf{0}$).


\subsection{Designing the test statistic}
Based on the linear model above, we want to compute a test statistic that will indicate when a coordinate $i$ is on support.
Intuitively, we predictive power of our linear model should be higher when $i$ is on support.
Indeed, a calculation shows that the variance in $X_i$ is reduced by approximately $\theta^2/k$. We want to measure this reduction in noise to detect when $i$ is on support or not. 

Suppose for instance that we have access to $\betas$ rather than $\betahat$ (note that this is not possible in practice since we do not know the support!). 
Since we want to measure the reduction in noise when the variable is on support, as a first step we might try the following statistic:
  \[ Q_i = \nicefrac{1}{n}\|y - \X \betas \|_2^2 \]
Unfortunately, this statistic will not be able to distinguish the two hypotheses, as the reduction in the above error is too small (on the order of $\theta^2/k$ compared to overall order of $1+\theta$), so deviation due to random sampling will mask any reduction in noise. We can fix this by adding the variance term $\|y\|^2$:
\[ Q_i = \nicefrac{1}{n}\|y\|_2^2 - \nicefrac{1}{n}\|y - \X \betas\|_2^2 \]


On a more intuitive level, including $\|y\|_2^2$ allows us to measure the relative \emph{gain} in predictive power without being penalized by a possibly large variance in $y$. Fluctuations in $y$ due to noise will typically be canceled out in the difference of terms in $Q_i$, minimizing the variance of our statistic. 

We have to add one final fix to the above estimator. We obviously do not have access to $\betas$, so we must use the estimate  $\betahat = \slr(y,\X,k)$ ($y,\X$ are as defined in Section \ref{sec:linearmodel}) which we get from our black-box. As our analysis shows, this substitution does not affect much of the discriminative power of $Q_i$ as long as the SLR black-box satisfies prediction error guarantees stated in Condition \ref{blackcond}. This gives our final statistic:\footnote{As pionted out by a reviewer, Note that this statistic is actually equivalent to $R^2$ up to rescaling by sample variance. Note that our formula is slightly different though as we use the sample variance computed with population mean as opposed to sample mean, as the mean is known to be zero.}
\[ Q_i = \nicefrac{1}{n}\|y\|_2^2 - \nicefrac{1}{n}\|y - \X \betahat \|_2^2. \]

\subsection{Algorithms}
Below we give algorithms for hypothesis testing and for support recovery, based on the $Q$ statistic:
\begin{figure}[!ht]
\begin{minipage}[t]{6cm}
  \begin{algorithm}[H]
    \caption{$Q$-hypothesis testing}
    \label{algohyp}
    \begin{algorithmic}
      \STATE Input: $\bX \in \Real^{d \times n} , k$
      \STATE Output: $\{0,1\}$
      \FOR{ $i = 1, \dots, d$} 
      \STATE $\betahat_i = \slr(\bX_i, \bX_{-i}, k) $
      \item $Q_i =\frac{1}{n}\|\bX_i\|_2^2-\frac{1}{n}\|\bX_i-\bX_{-i}\betahat_i\|_2^2$   
      \IF{ $Q_i>\frac{13k\log{\frac{d}{k}}}{n}$} \STATE return $1$
      \ENDIF
      \ENDFOR
      \STATE Return $0$
      \end{algorithmic}
      \end{algorithm}
    \end{minipage}%
\hfill
    \begin{minipage}[t]{6cm}
     \null 
\begin{algorithm}[H]
    \caption{$Q$-support recovery}
    \label{algosupp}
    \begin{algorithmic}
      \STATE Input: $\bX \in \Real^{d \times n} , k$
      \STATE $\widehat{S}=\varnothing$
      \FOR{ $i = 1,\ldots, d$} 
      \STATE $\betahat_i = \slr(\bX_i, \bX_{-i}, k) $
      \item $Q_i = \frac{1}{n} \|\bX_i\|_2^2-\frac{1}{n}\|\bX_i-\bX_{-i}\betahat_i\|_2^2$   
      \IF{ $Q_i>\frac{13k \log {\frac{d}{k}}}{n}$} \STATE $\widehat{S}\leftarrow \widehat{S}\cup\{i\}$
      \ENDIF
      \ENDFOR
      \STATE Return $\widehat{S}$
      \end{algorithmic}
      \end{algorithm}
    \end{minipage}
\hfill
\centering
\end{figure}

\bigskip

Below we summarize our guarantees for the above algorithms. The proofs are simple, but we defer them to Appendix \ref{app:analysis}.

\begin{theorem}[Hypothesis test]
\label{thm:hyptest}
Assume we have access to $\slr$ that satisfies Condition \ref{blackcond} and with runtime $T(d,n,k)$ per instance.
Under Condition \ref{cond1}, 
there exist universal constants $c_1,c_2,c_3,c_4$ s.t.
if $\theta^2 > \frac{c_1}{c_{min}^2} \frac{k^2\log d}{n}$ and $n > c_2 k \log d$,
Algorithm 1 outputs $\psi$ s.t.
\[\bP_{H_0}(\psi(X) = 1) \vee \bP_{H_1}(\psi(X) = 0) \le c_3\exp(-c_4 k \log d)\]
in time $O(dT + d^2n)$.
\end{theorem}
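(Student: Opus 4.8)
The plan is to analyze the statistic $Q_i$ under each hypothesis separately and show a clean separation at the threshold $\tau = 13 k\log(d/k)/n$. The key observation is the algebraic identity
\[
Q_i = \tfrac{1}{n}\|y\|_2^2 - \tfrac{1}{n}\|y - \X\betahat\|_2^2 = \tfrac{2}{n}\la y, \X\betahat\ra - \tfrac{1}{n}\|\X\betahat\|_2^2,
\]
and, more usefully, the bound $Q_i \le \tfrac{1}{n}\|y\|_2^2 - \tfrac{1}{n}\|y-\X\betas\|_2^2 + \tfrac{1}{n}\|\X\betas - \X\betahat\|_2^2 + (\text{cross term})$; the cross term $\tfrac{2}{n}\la y - \X\betas, \X(\betas - \betahat)\ra = \tfrac{2}{n}\la w, \X(\betas-\betahat)\ra$ is controlled by Cauchy–Schwarz together with Condition~\ref{blackcond}. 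So the whole argument reduces to (i) controlling $\tfrac{1}{n}\|\X\betas - \X\betahat\|_2^2$ via the black-box guarantee, (ii) concentrating $\tfrac{1}{n}\|y\|_2^2$ and $\tfrac{1}{n}\|y-\X\betas\|_2^2$, which are (scaled) $\chi^2$ variables, and (iii) lower-bounding the population gap $\E Q_i$ when $i$ is the ``good'' coordinate guaranteed by Condition~\ref{cond1}.

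For the null $H_0$: every column is $\NN(0, I_n)$ and independent of the others, so $\betas = \mathbf 0$ and $y = w$; then $Q_i = \tfrac{1}{n}\|y\|_2^2 - \tfrac{1}{n}\|y - \X\betahat\|_2^2 = \tfrac{2}{n}\la w, \X\betahat\ra - \tfrac{1}{n}\|\X\betahat\|_2^2 \le \tfrac{2}{n}\la w, \X\betahat\ra$. Since $\betahat$ is $k$-sparse, a union bound over the $\binom{d}{k}$ supports and the Gaussian concentration of $\la w, \X\beta\ra / \|\X\beta\|$ (combined with a crude upper bound on $\tfrac1n\|\X\beta\|^2$ for $k$-sparse $\beta$, which holds with probability $1 - \exp(-\Omega(k\log d))$ by standard Gaussian matrix concentration) gives $Q_i \lesssim \sqrt{k\log(d/k)/n}\cdot \sqrt{k\log(d/k)/n}$... more carefully, $Q_i \le O(k\log(d/k)/n)$ with the constant strictly below $13$, for all $i$ simultaneously, on an event of probability $1 - c_3\exp(-c_4 k\log d)$. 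Taking $n > c_2 k\log d$ ensures these deviation terms are genuinely of order $k\log d/n < 1$ so the $\chi^2$ tail bounds apply in their subgaussian regime.

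For the alternative $H_1$: fix $i$ to be the coordinate from Condition~\ref{cond1}; then $u_i^2 \asymp 1/k$ and $1-u_i^2 \asymp 1$, so $\sig^2 = 1 + \tfrac{\theta u_i^2}{1+(1-u_i^2)\theta}$ and one computes $\E \tfrac1n\|y\|_2^2 = 1 + \theta u_i^2$ and $\E \tfrac1n\|y - \X\betas\|_2^2 = \sig^2$, giving population gap $\E Q_i = 1 + \theta u_i^2 - \sig^2 = \tfrac{\theta^2 u_i^2(1-u_i^2)}{1+(1-u_i^2)\theta} \gtrsim \tfrac{\theta^2}{k}\cdot\tfrac{1}{1+\theta}$ (the exact form is in Appendix~\ref{app:calc}). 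When $\theta \lesssim 1$ this is $\gtrsim \theta^2/k$, and the hypothesis $\theta^2 \gtrsim \tfrac{1}{c_{min}^2}\tfrac{k^2\log d}{n}$ makes this $\gtrsim \tfrac{k\log d}{n}$, comfortably above $\tau = 13k\log(d/k)/n$ once the constant $c_1$ is large enough. When $\theta \gtrsim 1$ the gap only grows, so this case is easier. Then I subtract off: the $\chi^2$ fluctuations of the two norm terms are $O(\sqrt{k\log d/n}\cdot(1+\theta))$ per standard Laurent–Massart bounds, the black-box error $\tfrac1n\|\X\betas-\X\betahat\|_2^2 \le \tfrac{c\sig^2 k\log d}{\gamma(\X)^2 n}$, and the cross term is bounded by $\sqrt{\tfrac1n\|w\|^2}\cdot\sqrt{\tfrac1n\|\X(\betas-\betahat)\|^2}$; since $\gamma(\bX_{-i})$ is $\Omega(1)$ with high probability for a correlated Gaussian design with $n \gtrsim k\log d$ (this needs a short lemma, cited from the SLR literature), all of these error terms are $O(\sqrt{k\log d/n}\cdot(1+\theta))$, which is $o(\theta^2/k)$ precisely when $\theta^2 \gg k^2\log d/n$. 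So $Q_i > \tau$ with probability $1 - c_3\exp(-c_4 k\log d)$ and the algorithm returns $1$.

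The main obstacle is the null-case union bound: because $\betahat$ depends on the data in a way the black-box does not pin down beyond the sparsity and (vacuous under $H_0$) prediction bound, one must uniformly control $\tfrac2n\la w,\X\beta\ra$ over \emph{all} $k$-sparse unit-ish $\beta$, which forces the factor $\log\binom{d}{k} \asymp k\log(d/k)$ into the threshold and is exactly why the constant $13$ appears; getting the constants to line up (null side strictly below $13k\log(d/k)/n$, alternative side strictly above) while keeping the RE constant bounded below is the delicate bookkeeping. The runtime $O(dT + d^2 n)$ is immediate: $d$ black-box calls at cost $T(d,n,k)$ each, plus $O(dn)$ to form each $Q_i$ and $O(dn)$... $O(d\cdot dn)=O(d^2n)$ to extract the submatrices $\bX_{-i}$ and do the matrix–vector products.
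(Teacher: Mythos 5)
Your $H_1$ bookkeeping does not close at the claimed threshold. You propose to concentrate $\tfrac1n\|y\|_2^2$ and $\tfrac1n\|y-\X\betas\|_2^2$ separately as scaled $\chi^2$ variables, and to bound the cross term by plain Cauchy--Schwarz, $\tfrac2n|\la w,\X(\betas-\betahat)\ra|\le 2\sqrt{\tfrac1n\|w\|_2^2}\sqrt{\tfrac1n\|\X(\betas-\betahat)\|_2^2}$. Both routes give error terms of order $(1+\theta)\sqrt{k\log d/n}$. But at the stated threshold, with $\theta\le 1$ and $n>c_2k\log d$, the signal is $\theta^2/k\asymp k\log d/n<1$, and $\sqrt{k\log d/n}\gg k\log d/n$; your inequality ``$O(\sqrt{k\log d/n})=o(\theta^2/k)$ when $\theta^2\gg k^2\log d/n$'' goes the wrong way, so these deviation terms swamp the signal (this is exactly the failure mode the paper discusses when motivating the statistic). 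The paper's proof avoids it in two ways: expanding $Q_i$ with $y=\X\betas+w$ makes $\|w\|_2^2$ cancel \emph{identically}, so no $\chi^2_n$-scale fluctuation ever appears; and the cross terms $\tfrac1n w^\top\X\theta$ for $2k$-sparse $\theta$ are bounded by the sparse uniform bound of Lemma \ref{lem:crossterm} (Lemmas 8--9 of Raskutti et al.), $\tfrac{|w^\top\X\theta|}{n}\le 9\sigma\tfrac{\|\X\theta\|_2}{n}\sqrt{k\log(d/k)}$, which after inserting the black-box prediction bound is of order $k\log d/n$ --- a factor $\sqrt{n/(k\log d)}$ smaller than Cauchy--Schwarz. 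Without these two ingredients the separation at $\theta^2\asymp k^2\log d/n$ is not established.

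The $H_0$ side also has a gap. Dropping $-\tfrac1n\|\X\betahat\|_2^2$ and union-bounding $\tfrac2n\la w,\X\betahat\ra$ over supports cannot give a bound of order $k\log(d/k)/n$, because $\betahat$ comes with no norm control (the black box promises only $k$-sparsity; and the prediction bound is not ``vacuous'' under $H_0$ --- $\betas=\mathbf 0\in\Bz{k}$ applies --- but its constant $c/\gamma(\X)^2$ is unspecified, so it cannot certify the hard-coded threshold $13k\log(d/k)/n$ of Algorithm 1). If you instead keep the quadratic term and optimize $\tfrac{18\sigma t\sqrt{k\log(d/k)}}{n}-\tfrac{t^2}{n}$ over $t=\|\X\betahat\|_2$, the constant comes out around $81$, not ``strictly below $13$'' as you assert without argument. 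The paper's Lemma \ref{lem:h0} takes a different and sharper route: for each of the $\binom{d}{k}$ candidate supports $S$ it bounds $Q_i$ by its maximum over all $\beta$ supported on $S$, which equals $\tfrac1n$ times the squared norm of the projection of $y$ onto the column span of $\X_S$; since $\X_S$ is independent of $y$ under $H_0$, this is a $\chi^2$ with at most $k$ degrees of freedom, and the explicit Laurent--Massart tail with the union bound is precisely what yields the constant $13$. You need this worst-case-over-sparse-$\beta$ projection argument (or an equivalent) to match the algorithm's fixed threshold; your runtime accounting and the overall structure of the $H_1$ decomposition are otherwise consistent with the paper.
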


\begin{theorem}[Support recovery]
\label{thm:supp}
Under the same conditions as above plus Condition \ref{cond2}, if $\theta^2 > \frac{c_1}{c_{min}^2} \frac{k^2\log d}{n}$,
Algorithm 2 above finds 
$\widehat{S} = S$ with probability at least $1-c_3\exp(-c_4 k \log d)$ in time $O(dT + d^2n)$.
\end{theorem}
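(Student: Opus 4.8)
The plan is to reduce everything to two per-coordinate statements about the statistic $Q_i$ and then union bound over $i\in[d]$; these are the same estimates that drive Theorem~\ref{thm:hyptest}. I would record two preliminaries first. Fix $i$ and condition on the design $\bX_{-i}$: since $(\bX_i,\bX_{-i})$ is jointly Gaussian, the residual $w := \bX_i-\bX_{-i}\betas$ is independent of $\bX_{-i}$ and conditionally $\NN(0,\sigma^2 I_n)$ with $\betas,\sigma^2$ as in Section~\ref{sec:linearmodel}, so Condition~\ref{blackcond} applies verbatim to $\slr(\bX_i,\bX_{-i},k)$. Also, since the population covariance of $\bX_{-i}$ is $I_{d-1}+\theta u_{-i}u_{-i}^\top\succeq I_{d-1}$, a standard Gaussian restricted-eigenvalue bound gives $\gamma(\bX_{-i})\ge c_0>0$ with probability $\ge 1-e^{-\Omega(n)}$ whenever $n\gtrsim k\log d$.

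For $i\notin S$ (and every $i$ under $H_0$) we have $\betas=0$, $\sigma^2=1$, and $y:=\bX_i\sim\NN(0,I_n)$ independent of $\bX_{-i}$. With $T:=\supp(\betahat_i)$, $|T|\le k$, and $P_T$ the projection onto the column span of $(\bX_{-i})_T$, I would use $\|y-\bX_{-i}\betahat_i\|_2^2\ge\|y-P_Ty\|_2^2$, so $Q_i\le\tfrac1n\|P_Ty\|_2^2\le\tfrac1n\max_{|T'|=k}\|P_{T'}y\|_2^2$; each $\|P_{T'}y\|_2^2$ is ($\tfrac1n$ times) a $\chi^2$ with at most $k$ degrees of freedom, so a $\chi^2$ tail bound and a union bound over the $\binom{d-1}{k}\le e^{2k\log(d/k)}$ supports give $Q_i\le\tfrac{13k\log(d/k)}{n}$ with probability $\ge 1-e^{-\Omega(k\log(d/k))}$ — the constant $13$ in Algorithm~\ref{algosupp} is calibrated for exactly this.

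For $i\in S$, Condition~\ref{cond2} forces Condition~\ref{cond1} for every such $i$, and (for $k\ge2$) also $1-u_i^2\ge c_{min}^2/2$, hence $\sigma^2\le 1/(1-u_i^2)=O(1/c_{min}^2)$. Using $\bX_i=\bX_{-i}\betas+w$,
\[
Q_i=\tfrac1n\|\bX_{-i}\betas\|_2^2-\tfrac1n\|\bX_{-i}(\betahat_i-\betas)\|_2^2+\tfrac2n\langle\bX_{-i}\betahat_i,w\rangle,
\]
and I would estimate the three terms: the first has mean $\mu_i:=\betas^\top(I+\theta u_{-i}u_{-i}^\top)\betas=\frac{\theta^2 u_i^2(1-u_i^2)}{1+(1-u_i^2)\theta}$ (the LMMSE variance reduction) and concentrates around it up to a factor $1\pm O(\sqrt{k\log d/n})$; the second is $\le\frac{c\sigma^2 k\log d}{\gamma(\bX_{-i})^2 n}$ by Condition~\ref{blackcond}; and the third I would lower-bound by $-\tfrac2n\|\bX_{-i}\betahat_i\|_2\cdot\sup_{\beta\,k\text{-sparse}}\frac{|\langle\bX_{-i}\beta,w\rangle|}{\|\bX_{-i}\beta\|_2}$, controlling the supremum by $O(\sigma\sqrt{k\log(d/k)})$ via a Gaussian-process/union bound over supports (this is what lets $\betahat_i$ depend on $w$) and $\|\bX_{-i}\betahat_i\|_2\le\|\bX_{-i}\betas\|_2+\|\bX_{-i}(\betahat_i-\betas)\|_2$ by triangle inequality, so that $|\tfrac2n\langle\bX_{-i}\betahat_i,w\rangle|\le O(\sigma\sqrt{\mu_i k\log(d/k)/n})+O(\sigma^2 k\log d/n)\le\tfrac14\mu_i+O(\sigma^2 k\log d/n)$ by AM--GM. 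Collecting and using $n\gtrsim k\log d$ and $\gamma(\bX_{-i})\ge c_0$, I get $Q_i\ge\tfrac12\mu_i-O(k\log d/(c_{min}^2 n))$; since $u_i^2(1-u_i^2)\ge c_{min}^2/(2k)$, choosing $c_1$ large in $\theta^2>\frac{c_1}{c_{min}^2}\frac{k^2\log d}{n}$ forces $\mu_i$ above $\tfrac{13k\log(d/k)}{n}$ plus that error, hence $Q_i>\tfrac{13k\log(d/k)}{n}$ with probability $\ge 1-e^{-\Omega(k\log d)}$.

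Finally I would union-bound over all $d$ coordinates (harmless since $d\le e^{k\log(d/k)}$), giving $\widehat S=S$ with probability $\ge 1-c_3 e^{-c_4 k\log d}$, and the runtime is $d$ calls to $\slr$ ($dT$) plus $O(dn)$ arithmetic per coordinate for the products and norms defining $Q_i$, i.e.\ $O(dT+d^2 n)$. I expect the hard part to be the on-support estimate, and within it the cross term $\tfrac2n\langle\bX_{-i}\betahat_i,w\rangle$: because the black box outputs $\betahat_i$ as a function of $w$ it cannot be treated as fixed, and the uniform-over-$k$-sparse bound that circumvents this is precisely what introduces the extra $\log(d/k)$ factor and the $k^2$ (rather than $k$) scaling; pushing every error term strictly below the signal $\mu_i$ under only the weak Condition~\ref{cond2} also requires some care with the size of $\theta$, and is cleanest in the regime $\theta=O(1)$ (equivalently $n\gtrsim k^2\log d$).
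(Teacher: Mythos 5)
Your proposal is correct and follows essentially the same route as the paper: the on-support bound mirrors the paper's four-term decomposition (your combined cross term $\tfrac{2}{n}\langle \X\betahat,w\rangle$ is just the paper's two cross terms re-split by the triangle inequality, handled by the same uniform-over-sparse-supports lemma plus the black-box prediction-error bound and RE), the off-support bound is the same projection/$\chi^2$/union-over-$\binom{d}{k}$-supports argument, and the final step is the same union bound over coordinates with the identical runtime accounting. The only cosmetic difference is your bound $\sigma^2 \le 1/(1-u_i^2) = O(1/c_{min}^2)$ in place of the paper's $\sigma^2 \le 2$ under $\theta \le 1$, which affects only constants.
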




\subsection{Comments}

\textbf{RE for sample design matrix}
Because population covariance $\Sigma = \E[\X\X^\top]$ has minimum eigenvalue 1, with high probability the sample design matrix $\X$ has constant restricted eigenvalue value given enough samples, i.e. $n$ is large enough (see Appendix \ref{app:prop} for more details), and the prediction error guarantee of Condition \ref{blackcond} will be good enough for our analysis.

\textbf{Running time}
The runtime of both Algorithm \ref{algohyp} and \ref{algosupp} is $\tilde{O}(nd^2)$. The discussion presented at the end of Appendix
\ref{app:analysis} details why this is competitive for (single spiked) SPCA, at least theoretically.

\textbf{Unknown sparsity}
Throughout the paper we assume that the sparsity level $k$ is known. However, if $k$ is unknown, standard techniques could be used to adaptively find approximate values of $k$ (\cite{do2008sparsity}). For instance, for hypothesis testing, we can start with an initial overestimate $k'$, and keep halving until we get enough coordinates $i$ with $Q_i$ that passes the threshold for the given $k'$.

\subsection*{Robustness of Q statistic to rescaling}
\label{sec:rescale}

Intuitively, our algorithms for detecting correlated structure in data should be invariant to rescaling of the data; the precise scale or units for which one variable is measured should not have an impact on our ability to find meaningful structure underlying the data.
Our algorithms based on $Q$ are robust to rescaling, perhaps unsurprisingly, since correlations between variables in the support remain under rescaling.

On the other hand, diagonal thresholding, an often-used preprocessing step for SPCA which filters variables strictly based on variance, would trivially fail under rescaling. This illustrates a strength of our framework over other existing algorithms for SPCA.

Below we show explicitly that $Q$ statistics are indeed robust to rescaling:
Let $\widetilde{X} = DX$ be the rescaling of $X$, where $D$ is some diagonal matrix. Let $D_{S}$ be $D$ restricted to rows and columns in $S$.
Note that $\widetilde{\Sigma}$, the covariance matrix of the rescaled data, is just $D \Sigma D$ by expanding the definition. Similarly, note
$\widetilde{\Sigma}_{2:d,1} = D_{1} D_{2:d}  \Sigma_{2:d,1}$ where $D_{2:d}$ denotes $D$ without row and column $1$.
Now, recall the term which dominated our analysis of $Q_i$ under $H_1$, $(\betas)^\top \Sigma_{2:d} \betas$,
which was equal to
\[ \Sigma_{1,2:d} \Sigma_{2:d}^{-1} \Sigma_{2:d,1} \]

We replace the covariances by their rescaled versions to obtain:
\[ \tilde{\betas}^\top \widetilde{\Sigma} \tilde{\betas}
= (D_{1} \Sigma_{1,2:d} D_{2:d}) D_{2:d}^{-1} \Sigma_{2:d}^{-1} D_{2:d}^{-1} (D_{2:d} \Sigma_{2:d,1} D_{1} ) =   D_{1}^2  (\betas)^\top \Sigma_{2:d} \betas \]

For the spiked covariance model, rescaling variances to one amount to rescaling with $D_{1} = \nicefrac{1}{(1+\theta)}$. Thus, we see that our signal strength is affected only by constant factor (assuming $\theta \le 1$). 

\section{Experiments}
\label{sec:exp}
We test our algorithmic framework on randomly generated synthetic data and compare to other existing algorithms for SPCA. The code was implemented in Python using standard libraries.

We refer to our general algorithm from Section \ref{sec:algo} that uses the $Q$ statistic as SPCAvSLR.
For our SLR ``black-box,'' we use thresholded Lasso \cite{zhang2014lower}.\footnote{Thresholded Lasso is a variant of lasso where after running lasso, we keep the $k$ largest (in magnitude) coefficients to make the estimator k-sparse. Proposition 2 in \cite{zhang2014lower} shows that thresholded Lasso satisfies Condition \ref{blackcond}}. (We experimented with other SLR algorithms such as the forward-backward algorithm of \cite{foba} and CoSaMP\footnote{CoSaMP in theory requires the stronger condition of restricted isometry on the ``sensing'' matrix.} \cite{cosamp}, but results were similar and only slower.)

For more details on our experimental setup, including hyperparameter selection, see Appendix \ref{app:experiment}. 

\textbf{Support recovery} 
We randomly generate a spike $u \in \mathbb{R}^d$ by first choosing a random support of size $k$, and then using random signs for each coordinate (uniformity is to make sure Condition \ref{cond2} is met). Then spike is scaled appropriately with $\theta$ to build the spiked covariance matrix of our normal distribution, from which we draw samples. 

We study how the performance of six algorithms vary over various values of $k$ for fixed $n$ and $d$.\footnote{We remark that while the size of this dataset might seem too small to be representative ``high-dimensional'' setting, these are representative of the usual size of dataset that these methods are usually tested on. One bottleneck is the computation of the covariance matrix.} As in the \cite{deshpande2014sparse}, our measure is the fraction of true support.
We compare SPCAvSLR with the following algorithms: diagnoal thresholding, which is a simple baseline; ``SPCA'' (ZHT \cite{zou2006sparse}) is a fast heuristic also based on the regression idea; the truncated power method of \cite{yuan2013truncated}, which is known for both strong theoretical guarantees and empirical performance; covariance thresholding, which has state-of-the-art theoretical guarantees.

We modified each algorithm to return the top $k$ most likely coordinates in the support (rather than thresholding based on a cutoff);
for algorithms that compute a candidate eigenvector, we choose the top $k$ coordinates largest in absolute value.
\begin{figure}[!ht]
\includegraphics[scale=0.34]{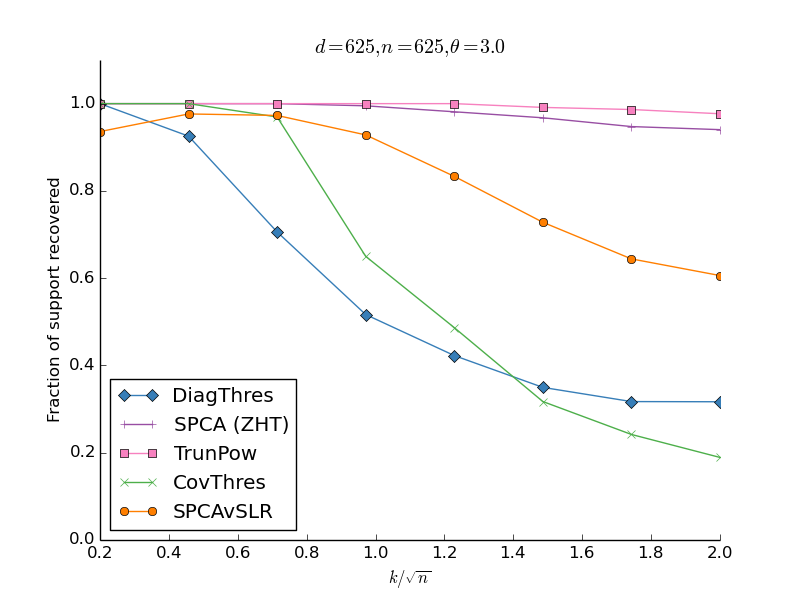}
\includegraphics[scale=0.34]{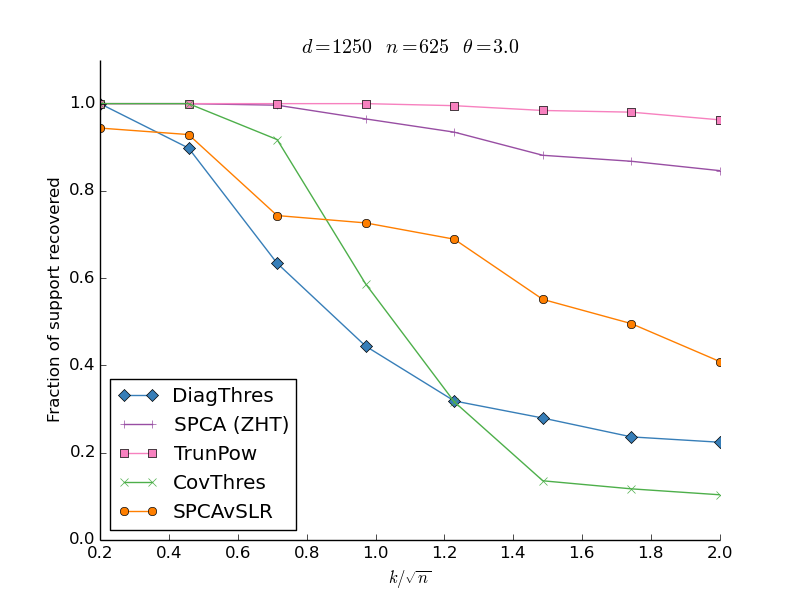}
\centering
\caption{Performance of diagonal thresholding, SPCA (ZHT), truncated power method, covariance thresholding, and SPCAvSLR for support recovery at $n=d=625$ (left) and $n=625,d=1250$ (right), varying values of $k$, and $\theta=3.0$. On the horizontal axis we show $k/\sqrt{n}$; the vertical axis is the fraction of support correctly recovered. Each datapoint on the figure is averaged over 50 trials.}
\setlength{\belowcaptionskip}{-20pt}
\end{figure}

We observe that SPCAvSLR performs better than covariance thresholding and diagonal thresholding, but its performance falls short of that of the truncated power method and the heuristic algorithm of \cite{zou2006sparse}.
We suspect the playing with different SLR algorithms may slightly improve its performance.
The reason for the gap between performance of SPCAvSLR and other state of the arts algorithms despite its theoretical guarantees is open to further investigation.


\textbf{Hypothesis testing} 
We generate data under two different distributions: for the spiked covariance model, we generate a spike $u$ by sampling a uniformly random direction from the $k$-dimensional unit sphere, and embedding the vector at a random subset of $k$ coordinates among $d$ coordinates; for the null, we draw from standard isotropic Gaussian. In a single trial, we draw $n$ samples from each distribution and we compute various statistics\footnote{While some of the algorithms used for support recovery in the previous section could in theory be adapated for hypothesis testing, the extensions were immediate so we do not consider them here.} (diagonal thresholding (DT), Minimal Dual Perturbation (MDP), and our $Q$ statistic, again using thresholded Lasso). We repeat for $100$ trials, and plot the resulting empirical distribution for each statistic. We observe similar performance of DT and $Q$, while MDP seems slightly more effective at distinguishing $H_0$ and $H_1$ at the same signal strength (that is, the distributions of the statistics under $H_0$ vs. $H_1$ are more well-separated).

\textbf{Rescaling variables}  
As discussed in Section \ref{sec:rescale}, our algorithms are robust to rescaling the covariance matrix to the correlation matrix.
As illustrated in Figure \ref{fig:rescaling} (right), DT fails while $Q$ appears to be still effective for distinguishing hypotheses the same regime of parameters.
Other methods such as MDP and CT also appear to be robust to such rescaling (not shown).
This suggests that more modern algorithms for SPCA may be more appropriate than diagonal thresholding in practice, particularly  on instances where the relative scales of the variables may not be accurate or knowable in advance, but we still want to be able to find correlation between the variables.

\begin{figure}[!ht]
\includegraphics[scale=0.21]{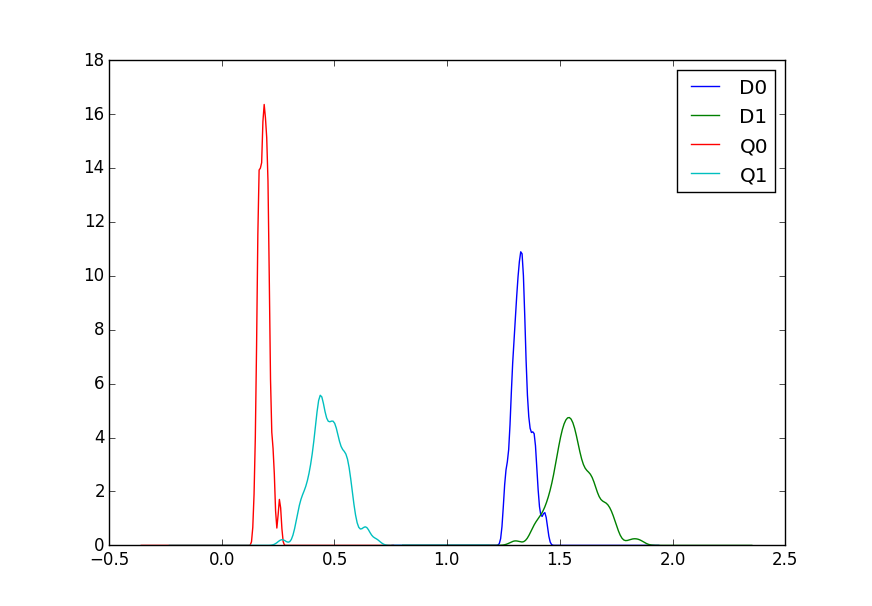}
\includegraphics[scale=0.21]{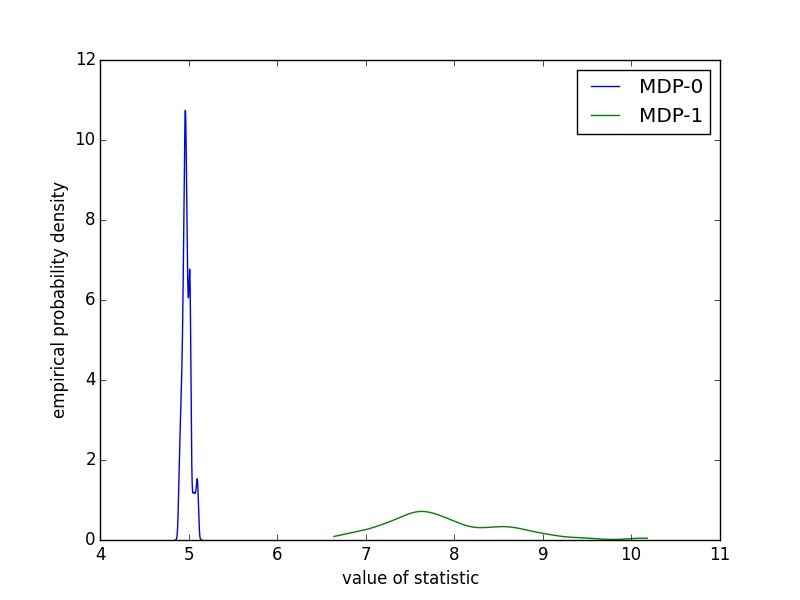}
\includegraphics[scale=0.21]{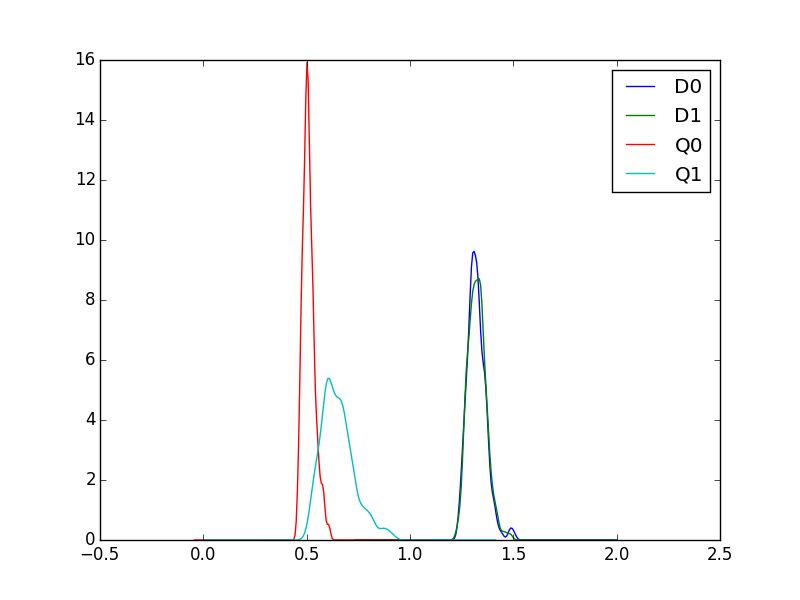}
\centering
\caption{Performance of diagonal thresholding (D), MDP, and $Q$ for hypothesis testing at $n=200,d=500,k=30,\theta=4$ (left and center). T0 denotes the statistic $T$ under $H_0$, and similarly for T1.
The effect of rescaling the covariance matrix to make variances indistinguishable is demonstrated (right).}
\label{fig:rescaling}
\end{figure}

\section{Previous work}
Here we discuss in more detail previous approaches to SPCA and how it relates to our work. Various approaches to SPCA have been designed in an extensive list of prior work. As we cannot cover all of them, we focus on works that aim to give computationally efficient (i.e. polynomial time) algorithms with provable guarantees in settings similar to ours.

These algorithms include fast, heuristic methods based on $\ell_1$ minimization \cite{jolliffe2003modified, zou2006sparse}, rigorous but slow methods based on natural semidefinite program (SDP) relaxations \cite{d2007direct, amini2009high, vu2013fantope, wang2016statistical}, iterative methods motivated by power methods for approximating eigenvectors \cite{yuan2013truncated, journee2010generalized}, non-iterative methods based on random projections \cite{gataric2017sparse}, among others.
Many iterative methods rely on initialization schemes, such as ordinary PCA or diagonal thresholding \cite{johnstone2012consistency}.

Below, we discuss the known sample bounds for support recovery and hypothesis testing.

\textbf{Support recovery}
\cite{amini2009high} analyzed both diagonal thresholding and an SDP for support recovery under the spiked covariance model.\footnote{They analyze the subcase when the spike is uniform in all $k$ coordinates.}
They showed that the SDP requires an order of $k$ fewer samples when the SDP optimal solution is rank one.
However, \cite{krauthgamer2013semidefinite} showed that the rank one condition does not happen in general, particularly in the regime approaching the information theoretic limit ($\sqrt{n} \lesssim k \lesssim \nicefrac{n}{\log d}$). This is consistent with computational lower bounds from \cite{berthet2013complexity} ($k \gtrsim \sqrt{n}$),
but a small gap remains (diagonal thresholding and SDP's succeed only up to $k \lesssim \sqrt{n / \log d}$). The above gap was closed by the covariance thresholding algorithm,
first suggested by \cite{krauthgamer2013semidefinite} and analyzed by \cite{deshpande2014sparse}, that succeeds in the regime $\sqrt{n / \log d} \lesssim k \lesssim \sqrt{n}$, although the theoretical guarantee is limited to the regime when $d/n \rightarrow \alpha$ due to relying on techniques from random matrix theory.

\textbf{Hypothesis testing}
Some works \cite{berthet2013optimal, amini2009high, d2014approximation} have focused on the problem of detection. In this case, \cite{berthet2013optimal} observed that it suffices to work with the much simpler dual of the standard SDP called Minimal Dual Perturbation (MDP). Diagonal thresholding (DT) and MDP work up to the same signal threshold $\theta$ as for support recovery, but MDP seems to outperform DT on simulated data \cite{berthet2013optimal}. 
MDP works at the same signal threshold as the standard SDP relaxation for SPCA. \cite{d2014approximation} analyze a statistic based on an SDP relaxation and its approximation ratio to the optimal statistic. In the regime where $k,n$ are proportional to $d$, their statistic succeeds at a signal threshold for $\theta$ that is independent of $d$, unlike the MDP. However, their statistic is quite slow to compute; runtime is at least a high order polynomial in $d$.

\textbf{Regression based approaches}
To the best of our knowledge, our work is the first to give a general framework for SPCA that uses SLR in a \emph{black-box} fashion. \cite{zou2006sparse} uses specific algorithms for SLR such as Lasso as a subroutine, but they use a heuristic alternating minimization procedure to solve a non-convex problem, and hence lack any theoretical guarantees.
\cite{meinshausen2006high} applies a regression based approach to a restricted class of graphical models. While our regression setup is similar, their statistic is different and their analysis depends directly on the particulars of Lasso. Further, their algorithm requires extraneous conditions on the data.
 \cite{cai2013sparse} also uses a reduction to linear regression for their problem of sparse subspace estimation. Their iterative algorithm depends crucially on a good initialization done by a diagonal thresholding-like pre-processing step, which fails under rescaling of the data.\footnote{See Section \ref{sec:rescale} for more discussion on rescaling.} Furthermore, their framework uses regression for the specific case of orthogonal design, whereas our design matrix can be more general as long as it satisfies a condition similar to the restricted eigenvalue condition. On the other hand, their setup allows for more general $\ell_q$-based sparsity as well as the estimation of an entire subspace as opposed to a single component. \cite{ma2013sparse} also achieves this more general setup, while still suffering from the same initialization problem.

\textbf{Sparse priors}
Finally, connections between SPCA and SLR have been noted in the probabilistic setting \cite{koyejo2014prior, khanna2015sparse}, albeit in an indirect manner: the same sparsity-inducing priors can be used for either problem. We view our work as entirely different as we focus on giving a black-box reduction. Furthermore, provable guarantees for the EM algorithm and variational methods are lacking in general, and it is not immediately obvious what signal threshold their algorithm achieves for the single spike covariance model.

\section{Conclusion}
\label{sec:conclude}
We gave a black-box reduction for reducing instances of the SPCA problem under the spiked covariance model to instances
of SLR. Given oracle access to SLR black-box meeting a certain natural condition, the reduction is shown to efficiently
solve hypothesis testing and support recovery.

Several directions are open for future work. The work in this paper remains limited to the Gaussian setting and to the single spiked covariance model. Making the results more general would make the connection made here more appealing. Also, the algorithms designed here, though simple, seem a bit wasteful in that they do not aggregate information from different statistics. Designing a more efficient estimator that makes a more efficient use of samples would be interesting. Finally, there is certainly room for improvement by tuning the choice of the SLR black-box to make the algorithm more efficient for use in practice.

\pagebreak

\newpage

{\small
\bibliographystyle{plain}
\bibliography{refs}
}

\appendix
\section*{Appendix}
The appendix is organized as follows. In Section \ref{app:slr}, we give additionaal background on the literature for sparse linear regression.
In Section \ref{app:linearmodel}, we give calculations for quantities from Section \ref{sec:linearmodel} in the main text, where we set up our main linear model. 
In Section \ref{app:analysis}, we give complete proofs for Theorems \ref{thm:hyptest} and \ref{thm:supp}.
Finally, we provide additional details for our experiments in Section \ref{app:experiment}.

\section{Additional background on SLR}
\label{app:slr}

\textbf{Efficient methods}
The $\ell_0$ estimator, which minimizes the reconstruction error $\|y-\X\betahat\|_2^2$ over all $k$-sparse regression vectors, achieves prediction error bound of form (\cite{btw07}, \cite{raskutti2011minimax}): $\nicefrac{1}{n}\|\X\betas - \X\betahat\|_2^2 \lesssim \nicefrac{(\sigma^2 k \log d)}{n}$ but takes exponential time $O(n^k)$ to compute. Various efficient methods have been proposed to circumvent this computational intractability: basis pursuit, Lasso\cite{lasso}, and the Dantzig selector \cite{candes2007dantzig}
are some of initial approaches.
Greedy pursuit methods such as OMP \cite{omp}, IHT\cite{iht}, CoSaMP\cite{cosamp}, and FoBa\cite{foba} among others offer more computationally efficient alternatives.\footnote{Note that some of these algorithms were presented for compressed sensing; nonetheless, their guarantees can be converted appropriately.}
These algorithms achieve the same prediction error guarantee as $\ell_0$ up to a constant, but under the assumption that $\X$ satisfies certain properties, such as
restricted eigenvalue  (\cite{bickel2009simultaneous}), compatibility (\cite{van2007deterministic}),
restricted isometry property (\cite{candes2005decoding}), (in)coherence (\cite{bunea2007sparse}), among others. In this work, we focus on the restricted eigenvalue (see Definition \ref{def:re} for a formal definition). We remark that restricted eigenvalue is among the weakest, and is only slightly stronger than the compatibility condition.
Moreover, \cite{zhang2014lower} give complexity-theoretic evidence for the necessity of dependence on the RE constant for certain worst case instances of the design matrix. See \cite{van2009conditions} for implications between various conditions. Without such conditions on $\X$, the best known guarantees provably obtain only a $1/\sqrt{n}$ decay rather than a $1/n$ decay in prediction error as number of samples increase; \cite{zhang2015optimal} give some evidence that this gap may be unavoidable.

\textbf{Optimal estimators}
The SLR estimators we consider are efficiently computable. Another line of work considers arbitrary estimators that are not necessarily efficiently computable. These include BIC \cite{btw07},  Exponential Screening \cite{RT11}, and Q-aggregation \cite{DRXZ14}. Such estimators achieve strong guarantees regarding minimax optimality in the form of oracle inequalities on MSE.

\textbf{Restricted Eigenvalue}
The restricted eigenvalue (RE) lower bounds the quadratic form defined by $\X$ in all (approximately) sparse directions. RE is related to more general notions such as the \emph{restricted strong convexity} \cite{negahban2009unified}, which roughly says that loss function is not too ``flat'' near the point of interest; this allows us to convert convergence in loss value to convergence in parameter value. In general when $d > n$, we cannot guarantee this for all directions, but it suffices to consider the set $\mathbb{C}(S)$ of ``mostly'' sparse directions.

We remark that the above condition is very natural and likely unavoidable. \cite{zhang2014lower} indicate that the dependence of the above prediction error guarantee on RE cannot be removed, under a standard conjecture in complexity theory. \cite{raskutti2010restricted, rudelson2012reconstruction} show that RE holds with high probability for correlated Gaussian designs (though it remains NP-hard to verify it \cite{bandeira2013certifying}).

A recent line of work \cite{pmlr-v65-david17a, gamarnik2017sparse} studies the algorithmic hardness of SLR when $\X$ has Gaussian design.

\section{Deferred calculations from Section \ref{sec:linearmodel}}
\label{app:linearmodel}

\subsection{Linear minimum mean-square-error estimation}
\label{app:lmmse}
Given random variables $Y$ and $X$ (this can be a vector more generally), what is the best prediction for $Y$ conditioned on knowing $X=x$? What is considered ``best'' can vary, but here we consider the mean squared error. That is, we want to come up with $\hat{y}(x)$ s.t. 

\[ \E[(Y-\hat{y})^2] \]
is minimized.

It is not hard to show that $\hat{y}$ is just the conditional expectation of $Y$ conditioned on $X$.
The minimum mean-square-error estimate can be a highly nontrivial function of $X$.

The \emph{linear minimum mean-square-error} (LMMSE) estimate instead restricts the attention to estimators of the form $\hat{Y} = AX + b$. Notice here that $A$ and $b$ are fixed and are not functions of $X$. 

One can show that the LMMSE estimator is given by:
$A = (\Sigma_{XX})^{-1}) \Sigma_{XY}$,
where $\Sigma_{\cdot}$ is the appropriately indexed covariance matrix, and $b$ is chosen in the obvious way to make our estimator unbiased.

\subsection{Calculations for the linear model}
\label{app:calc}
To recap our setup, we input the design matrix $\X =\bX_{-i}$ and the response variable $y= \bX_i$ as inputs to an SLR black-box. Our goal is to express $y$ as a linear function of $\X$ plus some independent noise $w$. Without loss of generality let $i=1$, and for our discussion below assume $S = \{1,...,k\}$.
For illustration, at times we will simplify our calculation further for the uniform case where $u_i=\frac1{\sqrt{k}}$ for $1\leq i\leq k$ and $u_i=0$ for $i>k$. 


For the moment, just consider one row of $\bX$, corresponding to one particular sample $X$ of the original SPCA distribution.
Since $X$ is jointly Gaussian, we can express (the expectation of) $y=X_1$ as a linear function of the other coordinates:
  $$
  \E[X_1 | X_{2:d} = x_{2:d}] = \Sigma_{1,2:d}(\Sigma_{2:d})^{-1} x_{2:d}
  $$
Hence we can write
  $$
  X_1 = \Sigma_{1,2:d}(\Sigma_{2:d})^{-1} X_{2:d} + w
  $$
where $w \sim \NN(0,\sigma^2)$ for some $\sigma$ to be determined and $w \indep X_i$ for $i=2,..,d$. \\

By directly computing the variance of the above expression for $X_1$, we deduce an expression for the noise level:
  \[\sigma^2 = \Sigma_{11} - \Sigma_{1,2:d} (\Sigma_{2:d})^{-1} \Sigma_{2:d,1} \label{eq:sigma} \]
  
Note that $\sigma^2$ is just $\Sigma_{11}$ under $H_0$. 
We proceed to compute $\sigma^2$ under $H_1$, when $\Sigma = I_d + \theta uu^\top$.
To compute $(\Sigma_{2:d})^{-1}$, we use (a special case of) the Sherman-Morrison formula: $(I+wv^\top)^{-1} = I - \frac{wv^\top}{1+v^\top w}$.
  \begin{align*}
  \Sigma_{2:d}^{-1} &= \left(I_{d-1} + {\theta}u_{-1}u_{-1}^\top\right)^{-1} = I_{d-1} - \frac{\theta}{1+(1-u_1^2)\theta} u_{-1}u_{-1}^\top 
  \end{align*} 
  
where $u_{-1} \in \Real^{d-1}$ is $u$ restricted to coordinates $2,...,d$.
  \begin{align*}
    \Sigma_{1,2:d}(\Sigma_{2:d})^{-1}\Sigma_{2:d,1} 
    &= \left(\frac{\theta u_1}{1+(1-u_1^2)}\right)^2 u_{-1}^\top (I + \theta u_{-1} u_{-1}^\top) u_{-1} \\ 
     &= \frac{\theta^2 u_1^2 (1-u_1^2)}{1+(1-u_1^2)\theta}\\
     & \text{(specializing to uniform case again)} \\
      &= \frac{\theta^2}{k}\left(1-\frac{1}{k}\right)\frac{1}{1+\frac{k-1}{k}\theta} \approx \frac{\theta^2}{k(1+\theta)} 
      \end{align*}
      
Finally, substituting into the expression for $\sigma^2$
\begin{align*}
    \sigma^2 &= 1 + \theta u_1^2 - \frac{\theta^2 u_1^2 (1-u_1^2)}{1+(1-u_1^2)\theta} \\
    &= 1 + \frac{\theta u_1^2}{1 + (1-u_1^2)\theta}\\
     &\le 2 \;\;\; \text{if $\theta \le 1$} 
  \end{align*}
We remark that the noise level of column 1 has been reduced by roughly $\tau := \frac{\theta^2}{k(1+\theta)}$ by regressing on correlated columns.

In summary, under $H_1$ (and if $1 \in S$) we can write 
$$
y = \X \betas + w
$$
where
\begin{align*} \betas &= (\Sigma_{2:d})^{-1} \Sigma_{2:d,1} \\
& =  (I - \frac{\theta}{1+(1-u_1)^2\theta}u_{-1}u_{-1}^\top) \theta u_1 u_{-1} \\
&= \theta u_1 \left(1 - \frac{\theta}{1+(1-u_1^2)\theta}( 1- u_1^2)\right) u_{-1} \\
&= \frac{\theta u_1}{1+(1-u_1^2)\theta} u_{-1}
\end{align*}
 (technically, the definition of $\betas$ on the RHS is a $k-1$ dimensional vector, but we augment it with zeros to make it $d-1$ dimensional)
 and $w \sim \NN(0,\sig^2)$ where $\sig^2=1 + \frac{\theta u_1^2}{1 + (1-u_1^2)\theta}$. 
 Note that in the uniform case, $\betas \rightarrow \frac{1}{k-1} \textbf{1}_{k-1}$ as $\theta \rightarrow \infty$ where $\textbf{1}_{k-1}$ is uniform 1 on first $k-1$ coordinates, as expected.

\subsection{Properties of the design matrix $\X$}
\label{app:prop}
\paragraph{Restricted eigenvalue (RE)}
Here we check that $\X$ defined as in Section \ref{sec:linearmodel} has constant restricted eigenvalue constant. This allows us to apply Condition \ref{blackcond} for the SLR black-box with good guarantee on prediction error.

The rows of $\X$ are drawn from $\NN(0, I_{d-1\times d-1} + \theta u_{-1}u_{-1}^\top)$ where $u_{-1}$ is $u$ restricted to coordinates $2,...,d$ wlog.\footnote{We assume here that $1 \in S$ as in the previous section}

Let $\Sigma = I_{d-1\times d-1} + \theta u_{-1}u_{-1}^\top$.
We can show that $\Sigma^{1/2}$ satisfies RE with $\gamma =1$ by bounding  $\Sigma$'s minimum eigenvalue. 
First, we compute the eigenvalues of $\theta u_{-1}u_{-1}^\top$.
$\theta u_{-1}u_{-1}^\top$ has a nullspace of dimension $d-2$, so eigenvalue 0 has multiplicity $d-2$. $u_{-1}$ is a trivial eigenvector with eigenvalue $\theta u_{-1}^\top u_{-1} = \theta \frac{k-1}{k}$. 
Therefore, $\Sigma$ has eigenvalues $1$ and $1+\theta\frac{k-1}{k}$. \\ 

Now we can extend this to the sample matrix $\X$ 
by applying Corollary 1 of  \cite{raskutti2010restricted} (also see Example 3 therein), and conclude that as soon as 
$n \gtrsim \frac{\max_j \Sigma_{jj}}{\gamma^2} k \log d = C (1+\frac{\theta}{k}) k \log d = \Omega(k \log p)$ the matrix $\X$ satisfies RE with $\gamma(\X) = 1/8$.\\

We remark that the following small technical condition also appears in known bounds on prediction error:

\paragraph{Column normalization}
This is a condition on the scale of $\X$ relative to the noise in SLR, which is always $\sigma^2$.
  
\[ \frac{\|\X\theta \|_2^2}{n} \le \|\theta\|_2^2\]
for all $\theta \in \Bz{2k}$

We can always rescale $\bX$ (and hence $\X$) to satisfy this, which would also rescale the noise level $\sigma$ in our linear model since the noise is derived from coming $\bX$ from the SPCA generative model, rather than added independently as in the usual SLR setup.

Hence, since all scale dependent quantities are scaled by the same amount when we scale the original data $\bX$, wlog we may continue to use the same $\X$ and $\sigma$ in our analysis.
As the column normalization condition does not affect us, we drop it from Condition \ref{blackcond} of our black-box assumption.\\

\section{Proofs of main Theorems}
\label{app:analysis}

In this section we analyze the distribution of $Q_i$ under both $H_0$ and $H_1$ on our way to proving Theorems \ref{thm:hyptest} and \ref{thm:supp}. Note that though the dimension and the sparsity of our $\slr$ instances are $d-1$ and $k-1$ (since we remove one column from the SPCA data matrix $\bX$ to obtain the design matrix $\X$), for ease of exposition we just use $d,k$ in their place since it does not affect the analysis in any meaningful way.

First, we review a useful tail bound on $\chi^2$ random variables.

\begin{lemma}[Concentration on upper and lower tails of the $\chi^2$ distribution (\cite{laurent}, Lemma 1)]
Let $Z$ be the $\chi^2$ random variable with $k$ degrees of freedom. 
Then,
\begin{align*}
\Pr(Z-k \ge 2 \sqrt{kt} + 2t) &\le \exp(-t) \\
\Pr(Z-X \ge 2\sqrt{kt}) &\le \exp(-t)
\end{align*}
\end{lemma}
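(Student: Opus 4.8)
The plan is to prove both inequalities by the standard Chernoff (exponential Markov) method applied to the moment generating function of $Z$. Write $Z = \sum_{i=1}^k X_i^2$ with $X_i \sim \NN(0,1)$ i.i.d., and recall the elementary identity $\E[e^{\lambda X_i^2}] = (1-2\lambda)^{-1/2}$ for $\lambda < 1/2$, so that $\E[e^{\lambda(Z-k)}] = e^{-\lambda k}(1-2\lambda)^{-k/2}$ on the same range of $\lambda$. Everything then reduces to optimizing over $\lambda$ and verifying one elementary polynomial-versus-logarithm inequality in each case.

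For the upper tail: for $x \ge 0$ and $0 < \lambda < 1/2$, Markov's inequality gives
\[
\Pr(Z - k \ge x) \le e^{-\lambda x}\,\E[e^{\lambda(Z-k)}] = \exp\!\Big(-\lambda(k+x) - \tfrac{k}{2}\log(1-2\lambda)\Big).
\]
Minimizing the exponent over $\lambda$ (the optimizer is $\lambda^\star = \tfrac{x}{2(k+x)} \in (0,1/2)$) yields $\Pr(Z-k \ge x) \le \exp\!\big(-\tfrac12[\,x - k\log(1 + x/k)\,]\big)$. It remains to substitute $x = 2\sqrt{kt} + 2t$ and check that $x - k\log(1+x/k) \ge 2t$; setting $a = \sqrt{t/k}$ so that $x/k = 2a + 2a^2$ and $2t/k = 2a^2$, this reduces exactly to $e^{2a} \ge 1 + 2a + 2a^2$, which is immediate from the Taylor expansion of the exponential (all omitted terms are nonnegative). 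This is the first displayed inequality.

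For the lower tail (the statement has a typo: ``$Z-X$'' should read ``$k-Z$''), I would run the same argument with a negative exponent: for $\lambda' > 0$, $\Pr(k - Z \ge x) \le e^{-\lambda' x}\,\E[e^{-\lambda'(Z-k)}]$, and after optimizing one gets $\Pr(k - Z \ge x) \le \exp\!\big(-\tfrac12[\,-x - k\log(1 - x/k)\,]\big)$ for $0 \le x < k$. Here the needed elementary fact is $-\log(1-u) \ge u + u^2/2$ for $u \in [0,1)$ (again all Taylor terms are nonnegative), which shows the exponent is at least $\tfrac{x^2}{4k}$; taking $x = 2\sqrt{kt}$ makes this exactly $t$. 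When $x \ge k$ the event $\{k - Z \ge x\}$ is impossible since $Z \ge 0$, so the bound holds trivially there. There is no real obstacle in this argument — the only points requiring a little care are tracking the admissible range of $\lambda$ through the optimization and selecting the two Taylor-truncation inequalities so that the clean thresholds $2\sqrt{kt} + 2t$ and $2\sqrt{kt}$ fall out; indeed these particular thresholds in the statement are precisely the ones engineered to make those comparisons tight.
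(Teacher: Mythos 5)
Your proposal is correct, and your reading of the second display as a lower-tail bound (the ``$Z-X$'' is indeed a typo for $k-Z$) is the right interpretation. The paper itself gives no proof of this lemma — it is imported verbatim from Laurent and Massart (Lemma 1 of \cite{laurent}) — and your Chernoff/moment-generating-function argument, with the optimized exponent $\tfrac12\left[x - k\log(1+x/k)\right]$ for the upper tail and $\tfrac12\left[-x - k\log(1-x/k)\right]$ for the lower tail followed by the two elementary Taylor comparisons $e^{2a}\ge 1+2a+2a^2$ and $-\log(1-u)\ge u+u^2/2$, is exactly the standard derivation underlying the cited result, specialized to unit weights; all the calculations check out.
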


We can simplify the upper tail bound as follows for convenience: 
\begin{corollary}
\label{cor:chisq}
For $\chi^2$ r.v. $Z$ with $k$ degrees of freedom and deviation $t \ge 1, \Pr\left(\frac{Z-k}{k} \ge 4t \right) \le \exp(-kt)$. 
\end{corollary}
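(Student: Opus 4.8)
The plan is to derive Corollary~\ref{cor:chisq} directly from the first inequality of Lemma~1 (the Laurent–Massart bound) by a suitable choice of the deviation parameter. Recall the upper-tail statement reads $\Pr(Z-k \ge 2\sqrt{kt} + 2t) \le \exp(-t)$ for any $t > 0$. I want to bound the event $\{(Z-k)/k \ge 4s\}$ for $s \ge 1$, i.e. $\{Z - k \ge 4ks\}$, by a probability of the form $\exp(-ks)$.

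First I would apply Lemma~1 with a reparametrized deviation: set $t = ks$ (so that the exponent on the right becomes $\exp(-ks)$, which is what we want). Then the left-hand threshold is $2\sqrt{k\cdot ks} + 2ks = 2k\sqrt{s} + 2ks$. The key elementary observation is that for $s \ge 1$ we have $\sqrt{s} \le s$, hence $2k\sqrt{s} + 2ks \le 2ks + 2ks = 4ks$. Therefore the event $\{Z - k \ge 4ks\}$ is contained in the event $\{Z - k \ge 2k\sqrt{s} + 2ks\}$, and Lemma~1 gives
\[
\Pr\!\left(\tfrac{Z-k}{k} \ge 4s\right) = \Pr(Z - k \ge 4ks) \le \Pr\!\left(Z - k \ge 2\sqrt{k(ks)} + 2ks\right) \le \exp(-ks).
\]
Renaming $s$ as $t$ yields exactly the statement of the corollary.

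There is essentially no obstacle here; the only thing to be careful about is the direction of the monotonicity argument — one needs the threshold in Lemma~1 to be \emph{smaller} than $4ks$ so that the tighter event sits inside the looser one, and this is precisely where the hypothesis $t \ge 1$ (giving $\sqrt{t} \le t$) is used. I would also remark in passing that the constant $4$ is not tight and is chosen for cleanliness in the subsequent application; any reader can see that $2\sqrt{t}+2t \le 4t$ could be sharpened, but the stated form is all that is needed in the proofs of Theorems~\ref{thm:hyptest} and~\ref{thm:supp}.
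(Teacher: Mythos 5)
Your derivation is correct and is exactly the intended argument: the paper states the corollary as a direct simplification of the Laurent--Massart upper tail bound, obtained precisely by substituting deviation $kt$ and using $\sqrt{t}\le t$ for $t\ge 1$ so that $2k\sqrt{t}+2kt\le 4kt$. Nothing is missing, and your remark about the monotonicity direction is the only point of care in the proof.
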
 

\subsection{Analysis of $Q_i$ under $H_1$}
Without loss of generality assume the support of $u$, denoted $S$, is $\{1,...,k\}$ and consider the first coordinate. We expand $Q_1$ by using $y = \X\beta^* + w$ as follows:
\begin{align*}
Q_1 &= \frac{1}{n}\|y\|_2^2 - \frac{1}{n}\|y - \X \betahat \|_2^2 = \frac{1}{n}\|\X\betas + w\|_2^2- \frac{1}{n}\|\X\betas - \X\betahat\|_2^2 - \frac{2}{n} w^\top (\X\betas - \X\betahat) - \frac{1}{n} \|w\|_2^2 \\
 &= \frac{1}{n}\|\X\betas\|_2^2 - \frac{2}{n} w^\top \X\betas - \frac{1}{n}(\|\X\betas - \X\betahat\|_2^2) - \frac{2}{n} w^\top (\X\betas - \X\betahat)
\end{align*}
Observe that the noise term $\|w\|_2^2$ cancels conveniently.

Before bounding each of these four terms, we introduce a useful lemma to bound cross terms involving noise $w$:
\begin{lemma}[Lemmas 8 and 9, \cite{raskutti2011minimax}]
\label{lem:crossterm}
For any fixed $\X \in \mathbb{R}^{n \times d}$ and independent noise vector $w \in \mathbb{R}^n$ with i.i.d. $\NN(0,\sigma^2)$ entries:
\[\frac{|w^\top \X\theta|}{n} \le 9\sigma \frac{\|\X\theta\|_2}{n} \sqrt{k \log \frac{d}{k}}\]
 for all $\theta \in \Bz{2k}$ w.p. at least $\ge 1- 2\exp(-40  k \log (d/k))$
\end{lemma}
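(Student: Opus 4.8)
The plan is to prove this uniform bound by the standard ``peeling over supports plus $\chi^2$ concentration on a fixed subspace'' argument. First I would reduce the supremum over all $\theta \in \Bz{2k}$ to a supremum over a fixed support of size $2k$: for a subset $T \subseteq [d]$ with $|T| = 2k$, any $\theta$ supported on $T$ has $\X\theta$ lying in the column span $V_T$ of $\X_T$, a subspace of dimension at most $2k$. Writing $\X\theta = \|\X\theta\|_2\, v$ with $v \in V_T$ a unit vector (the case $\X\theta = 0$ being trivial), we get $|w^\top \X\theta| = \|\X\theta\|_2\, |\langle w, v\rangle| \le \|\X\theta\|_2 \, \|\Pi_{V_T} w\|_2$, where $\Pi_{V_T}$ denotes orthogonal projection onto $V_T$. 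Crucially the right-hand side no longer depends on $\theta$ except through the scalar $\|\X\theta\|_2$, so it suffices to control $\|\Pi_{V_T} w\|_2$ for each fixed $T$.

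Second, since $w$ has i.i.d. $\NN(0,\sigma^2)$ entries and $\Pi_{V_T}$ is a fixed projection of rank at most $2k$, the quantity $\|\Pi_{V_T} w\|_2^2 / \sigma^2$ is stochastically dominated by a $\chi^2$ variable with $2k$ degrees of freedom. Applying the upper-tail bound stated above (Corollary \ref{cor:chisq}) with a deviation parameter proportional to $\log(d/k)$ gives $\|\Pi_{V_T} w\|_2 \le C_1 \sigma \sqrt{k \log(d/k)}$ with probability at least $1 - \exp(-C_2 k \log(d/k))$ for appropriate constants.

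Third, I would union-bound over the $\binom{d}{2k} \le (ed/2k)^{2k}$ choices of $T$, using $\log \binom{d}{2k} \le 2k \log(ed/2k) = O(k \log(d/k))$; choosing the $\chi^2$ deviation parameter large enough that $C_2 k\log(d/k)$ beats this entropy term by the required margin yields failure probability at most $2\exp(-40 k\log(d/k))$. On the event that the bound holds simultaneously for every $T$, any $\theta \in \Bz{2k}$ has its support inside some such $T$, so $|w^\top \X\theta|/n \le \|\X\theta\|_2 \|\Pi_{V_T} w\|_2 / n \le 9\sigma \|\X\theta\|_2 \sqrt{k\log(d/k)}/n$, which is the claim.

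The main obstacle I anticipate is purely the bookkeeping of constants: one must pick the $\chi^2$ deviation parameter so that, after the union bound over the $\binom{d}{2k}$ supports, the surviving exponent is still as large as $40 k\log(d/k)$, while the leading coefficient multiplying $\sigma \|\X\theta\|_2 \sqrt{k\log(d/k)}/n$ remains below $9$. This is a routine but slightly delicate calibration; everything else --- the subspace reduction, the $\chi^2$ tail, and the counting bound on $\binom{d}{2k}$ --- is standard. A minor point to handle carefully is that $\|\X\theta\|_2$ may vanish, in which case the inequality is trivial, and that although the unit vector $v$ depends on $\theta$ it always lies in the fixed subspace $V_T$, which is precisely why the supremum over unit vectors in $V_T$ simultaneously covers every $\theta$ supported on $T$.
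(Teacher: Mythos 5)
The paper never proves this lemma internally --- it is quoted from Raskutti--Wainwright--Yu --- and your argument is essentially the proof given in that source: restrict to a fixed support $T$ of size $2k$, note that $\sup_{\supp(\theta)\subseteq T}|w^\top \X\theta|/\|\X\theta\|_2=\|\Pi_{V_T}w\|_2$ with $V_T$ the column span of $\X_T$ (dimension at most $2k$), control $\|\Pi_{V_T}w\|_2^2/\sigma^2$ by a $\chi^2_{2k}$ tail bound, and union bound over the $\binom{d}{2k}$ supports; your handling of the degenerate case $\X\theta=0$ and of the fact that the unit vector $v$ varies with $\theta$ but stays in the fixed subspace $V_T$ is correct.

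One substantive caveat about the step you dismiss as routine calibration: the specific pair of constants $(9,40)$ in the quoted statement does not actually come out of this route. With leading factor $9$ you need $\P\bigl(\chi^2_{2k}\ge 81\,k\log(d/k)\bigr)$, whose exponent is at best about $-40.5\,k\log(d/k)+k\log\bigl(40.5\log(d/k)\bigr)$ even with the sharp $\chi^2$ tail (the paper's Corollary \ref{cor:chisq} is considerably looser), and the union bound costs roughly $\exp\bigl(2k\log(ed/2k)\bigr)$, so the surviving exponent is in the low-to-mid $30$'s rather than $40$; indeed, for $\log(d/k)=1$ and large $k$ a single support already fails with probability exceeding $2\exp(-40\,k\log(d/k))$, so no union-bound argument can yield exactly these constants for worst-case $\X$. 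This reflects loose or renormalized constants in the quoted statement rather than a flaw in your plan: either cite the source's statement as a black box, or prove the bound with unspecified universal constants $C\sigma\frac{\|\X\theta\|_2}{n}\sqrt{k\log(d/k)}$ and failure probability $\exp(-c\,k\log(d/k))$, which is all that the analysis of Terms 2 and 4 actually requires.
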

We bound each term as follows: 

\emph{Term 1}. The first term $\frac{\|\X\betas\|_2^2}{n}$ contains  the signal from the spike; notice its resemblance to the $k$-sparse eigenvalue statistic.
Rewritten in another way,
\[(\betas)^\top \frac{\X^\top \X}{n} \betas
 = (\betas)^\top \widehat{\Sigma}_{2:d} \betas \]
Hence, we expect this to concentrate around 
$(\betas)^\top \Sigma_{2:d} \betas$, which simplifies to (see Appendix \ref{app:calc} for the full calculation):
\begin{align*} 
(\betas)^\top \Sigma_{2:d} \betas 
= (\Sigma_{1,2:d} \Sigma_{2:d}^{-1}) \Sigma_{2:d} (\Sigma_{2:d}^{-1} \Sigma_{2:d, 1}) = 
\frac{\theta^2 u_1^2 (1-u_1^2)}{1+(1-u_1^2)\theta} 
\end{align*}

For concentration, observe that we may rewrite
\[(\betas)^\top \widehat{\Sigma}_{2:d} \betas = \frac{1}{n} \sum\limits_{i=1}^n (\X^{(i)} \betas)^2\]
where $\X^{(i)}$ is the $i$th row, representing the $i$th sample. This is just an appropriately scaled chi-squared random variable with $n$ degrees of freedom (since each $\X^{(i)} \betas$ is i.i.d. normal), and the expected value of each term in the sum is the same as computed above. Applying a lower tail bound on $\chi^2$ distribution (see Appendix \label{app:chisq}), with probability at least $1-\delta$ we have 
\begin{align*}
 (\betas)^\top \widehat{\Sigma}_{2:d} \betas
\ge \frac{\theta^2 u_1^2(1-u_1^2)}{1+(1-u_1^2)\theta} \cdot \left(1 - 2\sqrt{\frac{\log (1/\delta)}{n}}  \right)
\end{align*}
Choosing $\delta = \exp(-k \log d)$,
\begin{align} \frac{\|\X\betas\|_2^2}{n}
&\ge \frac{\theta^2 u_1^2(1-u_1^2)}{1+(1-u_1^2)\theta} \cdot \left(1 - 2\sqrt{\frac{ k \log d}{n}}  \right) \nonumber \\
&\stackrel{(a)}{\ge}  \frac{1}{2} \cdot \frac{\theta^2 u_1^2(1-u_1^2)}{1+(1-u_1^2)\theta}  \nonumber \;\;\;\;\; \\
&\stackrel{(b)}{\ge} \frac{c_{min}^2}{4} \frac{\theta^2}{k} \label{eqn:signal}
\end{align}
where $(a)$ as long as $n > 16 k \log d$ and $(b)$ since $\theta \le 1$ and $u_1^2(1-u_1^2) \gtrsim c_{min}^2/k$ under Condition \ref{cond1}. 

\emph{Term 2}. The absolute value of the second term $\frac{2}{n} w^\top \X\betas$  can be  bounded by $18 \frac{\|\X\betas\|_2}{n} \sqrt{k \log \frac{d}{k}}$ using Lemma \ref{lem:crossterm}.
From (\ref{eqn:signal}) as long as $\theta^2 > \frac{c_1}{c_{min}^2} \frac{k^2 \log d}{n}$ ($c_1$ is some constant that we will choose later),
\[ \frac{\|\X\betas\|_2^2}{n} \ge \frac{c_{min}^2}{4} \frac{\theta^2}{k} \ge \frac{c_1}{4} \frac{k \log d}{n} \]
so the first two terms together are lower bounded by:
\begin{align}
 \frac{\|\X\betas\|_2}{n} (\|\X\betas\|_2 - 18 \sqrt{k \log d/k})
 \ge \frac{c_1}{5} \frac{k \log d}{n},
\end{align}
for large enough constant $c_1$.

\emph{Term 3}. The third term, which is the prediction error $\frac{\|\X\beta^* - \X\hat{\beta}\|_2^2}{n}$, is upper bounded by $\frac{C}{\gamma(X)^2}\frac{\sigma^2 k \log d}{n}$ 
with probability at least $1-C\exp(-C' k\log d)$ by Condition \ref{blackcond} on our SLR black-box.
Note $\sigma^2 < 2$ if we assume $\theta \le 1$.\footnote{As smaller $\theta$ makes the problem only harder, we assume $\theta \le 1$ for ease of computation and as standard in literature.} Now, $\gamma(X) \ge \frac{1}{8}$ with probability at least $1-C\exp(-C'n)$ if 
$n > C'' k \log d$ since $\theta \le 1$ (see Appendix \ref{app:prop} for more details). Then,
\[\frac{1}{n}\|\X\beta^* - \X\hat{\beta}\|_2^2
\le C  \frac{ k \log d}{n} \]

\emph{Term 4}. The contribution of the last cross term $\frac{2}{n} w^\top \X(\beta^* - \betahat)$ can also be bounded by Lemma \ref{lem:crossterm}
w.h.p. (note $\betas -\betahat \in \Bz{2k}$)
\[\frac{|w^\top \X(\beta^* - \betahat)|}{n} \le 9\sigma \frac{\|\X(\beta^* - \betahat)\|_2}{n} \sqrt{k \log \frac{d}{k}}. \]
Combined with the above bound for prediction error, this bounds the cross term's contribution by at most $C \frac{ k \log d}{n}$.

Putting the bounds on four terms together, we get the following lower bound on $Q$.

\begin{lemma}
\label{lem:h1}
There exists constants $c_1, c_2, c_3, c_4$ s.t.
if $\theta^2 > \frac{c_1}{c_{min}^2} \frac{k^2 \log d}{n}$ and
$n > c_2 k \log d$, with probability at least $1-c_3 \exp(-c_4 k \log d)$,
for any $i \in S$ that satisfies the size bound in Condition \ref{cond1},
\[Q_i >  \frac{13 k\log d}{n} \]
\end{lemma}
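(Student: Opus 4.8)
The plan is to assemble the bound on $Q_i$ directly from the four-term decomposition written down above, since all of the analytic work has been isolated into the term-by-term estimates; what remains is bookkeeping on constants and a union bound over failure events. Take $i=1$ without loss of generality (the argument is identical for any $i \in S$ meeting the size bound in Condition \ref{cond1}), and write $Q_1 = T_1 - T_2 - T_3 - T_4$ with $T_1 = \frac1n\|\X\betas\|_2^2$, $T_2 = \frac2n w^\top\X\betas$, $T_3 = \frac1n\|\X\betas - \X\betahat\|_2^2$, and $T_4 = \frac2n w^\top(\X\betas - \X\betahat)$, the $\frac1n\|w\|_2^2$ contributions having cancelled.

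First I would lower bound the signal term $T_1$. Conditioned on $\X$, the quantity $(\betas)^\top\widehat{\Sigma}_{2:d}\betas$ is a scaled $\chi^2$ with $n$ degrees of freedom whose mean is the population value $\frac{\theta^2 u_1^2(1-u_1^2)}{1+(1-u_1^2)\theta}$ computed in Appendix \ref{app:calc}; the lower-tail $\chi^2$ bound with $\delta = \exp(-k\log d)$ together with $n > 16 k\log d$ gives inequality (\ref{eqn:signal}), i.e. $T_1 \ge \frac{c_{min}^2}{4}\cdot\frac{\theta^2}{k}$, using $\theta \le 1$ and Condition \ref{cond1}. Under the hypothesis $\theta^2 > \frac{c_1}{c_{min}^2}\cdot\frac{k^2\log d}{n}$ this says $T_1 \ge \frac{c_1}{4}\cdot\frac{k\log d}{n}$, so $T_1$ is proportional to $\frac{k\log d}{n}$ with a constant we can make as large as we wish by taking $c_1$ large.

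Next I would control the other three terms against this same scale $\frac{k\log d}{n}$. For $T_2$, Lemma \ref{lem:crossterm} gives $|T_2| \le 18\sigma\frac{\|\X\betas\|_2}{n}\sqrt{k\log(d/k)}$; since this is $\sqrt{T_1}$ times a fixed quantity, combining with the lower bound on $T_1$ yields $T_1 - |T_2| \ge \frac{c_1}{5}\cdot\frac{k\log d}{n}$ once $c_1$ is a large enough absolute constant. For $T_3$, Condition \ref{blackcond} with $\gamma(\X) \ge \frac18$ (which holds with the stated probability as soon as $n \gtrsim k\log d$, by Appendix \ref{app:prop}) and $\sigma^2 < 2$ gives $T_3 \le C\frac{k\log d}{n}$ for an absolute constant $C$. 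Finally $\betas - \betahat \in \Bz{2k}$, so Lemma \ref{lem:crossterm} applies to $T_4$ as well, and chaining with the bound on $T_3$ gives $|T_4| \le C'\frac{k\log d}{n}$.

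Combining, $Q_1 \ge \big(\tfrac{c_1}{5} - C - C'\big)\frac{k\log d}{n}$, and a suitable large absolute choice of $c_1$ (with $c_2$ large enough to absorb all the sample-size preconditions above, and $c_{min}$ a fixed constant) forces $Q_1 > 13\frac{k\log d}{n}$. The total failure probability is a union over the $\chi^2$ event, two applications of Lemma \ref{lem:crossterm}, the black-box event, and the restricted-eigenvalue event; each is at most $c_3\exp(-c_4 k\log d)$ (the RE term $C\exp(-C'n)$ being dominated since $n \gtrsim k\log d$), which is the claimed bound. The only genuine subtlety is ordering the constants correctly so that the single large parameter $c_1$ simultaneously makes $\theta$ large enough for $T_1$ to dominate, beats the additive contributions of $T_3$ and $T_4$, and clears the explicit threshold $13$; because $C, C'$ are absolute and $c_{min}$ is fixed, this is routine rather than delicate.
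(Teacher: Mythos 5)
Your proposal is correct and follows essentially the same route as the paper: the identical four-term decomposition of $Q_i$, the $\chi^2$ lower tail for the signal term, Lemma \ref{lem:crossterm} for both cross terms, Condition \ref{blackcond} together with the restricted eigenvalue bound for the prediction-error term, and a final union bound with $c_1$ chosen large enough to clear the $13\,k\log d/n$ threshold. The only cosmetic difference is that you restate the term-by-term estimates inside the proof, whereas the paper establishes them beforehand and the proof itself is just the assembly step.
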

\begin{proof}
From 1-4 above, by union bound, all four bounds fail to hold with probability at most $c_3 \exp(-c_4 k \log d)$
for appropriate constants if $\theta^2 > \frac{c_1}{c_{min}^2} \frac{k^2 \log d}{n}$ (required by Term 2) and $n > c_2 k \log d$ for some $c_2 > 0$ (note that both terms 1 and 3 require sufficient number of samples $n$).
If all four bounds hold, we have:
\[Q_i >  \frac{c_1}{5} \frac{k \log d}{n}  - C'  \frac{k \log d}{n} \]

where $C,C'$ are just some constants. So if $c_1$ is sufficiently large, the above bound is greater than $\frac{13 k \log d}{n}$.\footnote{The choice of constant 13 may seem a little arbitrary, but this is just to be consistent with Lemma \ref{lem:h0}. There, the constant just falls out of convenient choices for simplification, and is not optimized for in particular.}
\end{proof}

\subsection{Analysis of $Q_i$ under $H_0$}
We could proceed by decomposing $Q_i$ the same way as in $H_1$; all the error terms including prediction error are still bounded by $O(k \log d/n)$ in magnitude, and the signal term is gone now since $\betas = 0$.
This will give the same upper bound (up to a constant) as the following proof is about to show.
However, we find the following direct analysis more informative and intuitive.

Since our goal is to upper bound $Q_i$ under $H_0$, we may let  $\betahat$ be the optimal possible choice given $y$ and $\X$ (one that minimizes $\|y-\X\betahat\|_2^2$, and hence maximizes $Q_i$). We further break this into two steps. We enumerate over all possible subsets $S$ of size $k$, and conditioned on each $S$, choose the optimal $\betahat$. 

Fix some support $S$ of size $k$. 
The span of $\X_S$ is at most a $k$-dimensional subspace of $\Real^n$.
Hence, we can consider some unitary transformation $U$ of $\Real^n$ that maps the span of $\X_S$ into the subspace spanned by the first $k$ standard basis vectors. Since $U$ is an isometry by definition, 
\[
  nQ_i = \|y\|_2^2 - \|y-\X\betahat_S\|_2^2 = \|Uy\|_2^2 - \|Uy-U\X\betahat_S\|_2^2
\]
Let $\tilde{y} = Uy$. 
Since $U\X\betahat_S$ has nonzero entries only in the  first $k$ coordinates,
the optimal choice (in the sense of maximizing the above quantity) of $\betahat_S$ is to choose linear combinations of the first $k$ columns of $\X$ so that $U\X\betahat_S$ equals the first $k$ coordinates of $\tilde{y}$.
Then, $nQ_i$ is just the squared norm of the first $k$ coordinates of $\tilde{y}$.
Since $U$ is some unitary matrix that is independent of $y$ (being a function of $\X_S$ which is independent of $y$), $\tilde{y}$ still has i.i.d. $\NN(0,1)$ entries, and hence $nQ_i$ is a $\chi^2$-var with $k$ degrees of freedom. 

Now we apply an upper tail bound on the $\chi^2$ distribution  (see Corollary \ref{cor:chisq}). Choosing $t = 3\log \frac{d}{k}$, and after union bounding over all 
$\binom{d}{k} \le \left(\frac{de}{k}\right)^k$ supports $S$, 
$nQ_i > k + 12k \log \frac{d}{k} \ge 13 k \log \frac{d}{k}$
with probability at most $\exp(-3k \log \frac{d}{k} + k \log \frac{de}{k}) \le \exp(- k \log \frac{d}{k})$
as long as $\frac{d}{k} \ge e$.
 
\begin{lemma}
\label{lem:h0}
Under $H_0$, $\forall i$ $Q_i \le \frac{13 k \log \frac{d}{k}}{n}$
w.p. at least $1-\exp(-k \log \frac{d}{k})$.
\end{lemma}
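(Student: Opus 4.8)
The plan is to bound $Q_i$ by the value it would take if the black‑box returned the \emph{best possible} $k$‑sparse predictor, and then to show that even this oracle value is a small $\chi^2$ random variable. Concretely, $Q_i$ is monotone in $-\|y-\X\betahat\|_2^2$, so replacing $\betahat$ by $\argmin_{\beta \in \Bz{k}} \|y-\X\beta\|_2^2$ only increases $Q_i$; it therefore suffices to upper bound this oracle quantity. I would carry out the oracle minimization in two stages: first fix a support $S \subseteq [d]\setminus\{i\}$ with $|S|=k$ and optimize over $\betahat$ supported on $S$, and afterwards union bound over the $\binom{d}{k}$ choices of $S$ (and over the $d$ choices of $i$, to get the ``$\forall i$'' statement).

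For a fixed $S$, the columns $\X_S$ span a subspace of $\R^n$ of dimension at most $k$. I would pick a unitary $U$ mapping $\mathrm{span}(\X_S)$ into $\mathrm{span}(e_1,\dots,e_k)$; since $U$ is an isometry, $nQ_i = \|Uy\|_2^2 - \|Uy - U\X\betahat_S\|_2^2$, and because $U\X\betahat_S$ is supported on the first $k$ coordinates, the maximizing $\betahat_S$ makes $U\X\betahat_S$ agree with $Uy$ on those coordinates. Hence the oracle $nQ_i$ equals the squared norm of the first $k$ coordinates of $\tilde y := Uy$. Under $H_0$ the entries of $\bX$ are i.i.d.\ $\NN(0,1)$, so $y=\bX_i$ is independent of $\X=\bX_{-i}$, hence independent of the data‑dependent rotation $U$; therefore $\tilde y \sim \NN(0,I_n)$ and $nQ_i \sim \chi^2_k$.

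It then remains to apply the upper‑tail bound of Corollary \ref{cor:chisq} with deviation $t = 3\log(d/k)$, giving $nQ_i \le k + 12k\log(d/k) \le 13 k\log(d/k)$ (using $d/k \ge e$) with failure probability at most $\exp(-3k\log(d/k))$ for each fixed pair $(i,S)$, and then to union bound: there are at most $d\binom{d}{k} \le d(de/k)^k$ such pairs, and $-3k\log(d/k) + \log d + k\log(de/k) \le -k\log(d/k)$ once $d/k$ exceeds a sufficiently large absolute constant (otherwise one simply absorbs the $\log d$ into the constants). This yields the claimed probability $1 - \exp(-k\log(d/k))$.

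I expect the only genuine subtlety to be the step in the second paragraph: the observation that even the best $k$‑sparse fit can cancel at most $k$ degrees of freedom of pure noise, together with the independence argument that rotating $y$ by a matrix constructed from $\X$ leaves the law of $y$ unchanged because $y \indep \X$ under the null. Everything after that is a routine $\chi^2$ tail estimate and a union bound over supports.
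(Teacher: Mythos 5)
Your proposal is correct and follows essentially the same route as the paper's proof: bound $Q_i$ by the oracle $k$-sparse fit, rotate by a unitary built from $\X_S$ (valid because $y=\bX_i\indep\X=\bX_{-i}$ under $H_0$), identify $nQ_i$ with a $\chi^2_k$ variable, and apply Corollary \ref{cor:chisq} with $t=3\log(d/k)$ plus a union bound over supports. The only cosmetic difference is that you also union bound over the $d$ choices of $i$ inside the lemma (absorbing the extra $\log d$), whereas the paper proves the per-coordinate bound and defers that union bound to the proof of Theorem \ref{thm:hyptest}.
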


\begin{remark} Union bounding over all $S$ is necessary for the analysis. For instance, we cannot just fix $S$ to be $S(\betahat)$ (this denotes the support of $\betahat$) since $\betahat$ is a function of $y$, so fixing $S$ changes the distribution of $y$.
\end{remark}

\begin{remark} Observe that this analysis of $Q_i$ for $H_0$ also extends immediately to $H_1$ when coordinate $i$ is outside the support. The reason the analysis cannot extend to when $i \in S$ is because $U$ is not independent of $y$ in this case.
\end{remark}

\begin{corollary}
\label{cor:h0}
Under $H_1$, if $i \not\in S$, $Q_i \le \frac{13 k \log \frac{d}{k}}{n}$
w.p. at least $1-\exp(-k \log \frac{d}{k})$.
\end{corollary}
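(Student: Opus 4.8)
The plan is to observe that the proof of Lemma \ref{lem:h0} used only one structural fact about the null model: that the response $y$ has i.i.d. $\NN(0,1)$ coordinates \emph{and} is independent of the design matrix $\X$. So the first step is to check that this fact persists under $H_1$ whenever $i \notin S$. In the spiked covariance model $X \sim \NN(0, I_d + \theta uu^\top)$ with $u_i = 0$, the coordinate $X_i$ is uncorrelated with every $X_j$, $j \neq i$; since $X$ is jointly Gaussian this upgrades to independence of $X_i$ from $X_{-i}$, and moreover $\var(X_i) = 1$. Hence, across the $n$ samples, $y = \bX_i$ has i.i.d.\ $\NN(0,1)$ entries and is independent of $\X = \bX_{-i}$, which is exactly the setting of Lemma \ref{lem:h0}.

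The second step is to replay that argument verbatim. Since we only want an upper bound on $Q_i$, replace $\betahat$ by the minimizer of $\|y - \X\betahat\|_2^2$ over all $k$-sparse vectors (this only increases $Q_i$), and enumerate over candidate supports $S'$ of size $k$ in $[d]\setminus\{i\}$. For a fixed $S'$, the column span of $\X_{S'}$ is a subspace of $\Real^n$ of dimension at most $k$, so there is a unitary $U$ — a measurable function of $\X_{S'}$ and therefore independent of $y$ — mapping that span into $\mathrm{span}(e_1,\dots,e_k)$. Writing $\tilde y = Uy$, the optimal reconstruction sets $U\X\betahat_{S'}$ equal to the first $k$ coordinates of $\tilde y$, so $nQ_i$ (with support restricted to $S'$) equals the squared norm of the first $k$ coordinates of $\tilde y$; since $U \indep y$ and $y$ is i.i.d.\ standard normal, $\tilde y$ is again i.i.d.\ standard normal and $nQ_i$ is a $\chi^2$ random variable with $k$ degrees of freedom.

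The third step is the tail estimate: apply Corollary \ref{cor:chisq} with $t = 3\log(d/k)$ and union bound over the $\binom{d-1}{k} \le \binom{d}{k} \le (de/k)^k$ choices of $S'$, giving $nQ_i > k + 12k\log(d/k) \ge 13 k\log(d/k)$ with probability at most $\exp\!\big(-3k\log(d/k) + k\log(de/k)\big) \le \exp(-k\log(d/k))$ whenever $d/k \ge e$. I do not expect a genuine obstacle here; the one point requiring care is the independence claim in the first step, and that is precisely where the hypothesis $i \notin S$ enters — when $i \in S$ the response $y$ is correlated with the remaining columns, $U$ ceases to be independent of $y$, and the $\chi^2$ conclusion breaks, which is exactly the asymmetry flagged in the preceding remark. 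Everything downstream of the independence check is identical to the proof of Lemma \ref{lem:h0}.
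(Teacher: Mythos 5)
Your proposal is correct and matches the paper's own argument: the paper proves this corollary via the remark that the $H_0$ analysis of Lemma \ref{lem:h0} extends verbatim once one notes that for $i \notin S$ the response $\bX_i$ is standard normal and (by joint Gaussianity and zero correlation) independent of $\bX_{-i}$, so the unitary $U$ remains independent of $y$. Your explicit verification of that independence step and the identical $\chi^2$ tail/union-bound computation is exactly the intended proof.
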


\subsection{Proof of Theorem \ref{thm:hyptest}}
\begin{proof}
Proof follows immediately from Lemma \ref{lem:h1} and Lemma \ref{lem:h0}.
We can use our statistics $Q_i$ to separate $H_0$ and $H_1$. 
Under $H_0$, applying Lemma \ref{lem:h0} to each coordinate $i$ and union bounding,  
$\forall i$,  $Q_i \le \frac{13 k \log \frac{d}{k}}{n}$
with probability at least $1 - \exp(-C k \log d)$. 
 Meanwhile, under $H_1$, if we consider any coordinate $i$ that satisfies Condition \ref{cond1},  Lemma \ref{lem:h1} gives
\[Q_i > \frac{13 k \log d}{n} \]
with probability at least $1 - c_3 \exp (-c_4 k\log d)$. 
Since $\psi$ tests whether $Q_i > \frac{13 k \log \frac{d}{k}}{n}$ for at least one $i$, $\psi$ distinguishes $H_0$ and $H_1$ successfully, with bound on type I and type II error probability $c_3  \exp (-c_4 k\log d)$ for appropriate constants $c_3,c_4$ (note, these may be different from those of Lemma \ref{lem:h1}).
For runtime, note that we make $d$ calls to the $\slr$ black-box and work with matrices of size $n \times d$.
\end{proof}
\subsection{Proof of Theorem \ref{thm:supp}}
\begin{proof}
As long as every $u_i$ for $i \in S$ has magnitude $c_{min}/\sqrt{k}$ as in Condition \ref{cond2}, we can repeat the same analysis from above to all coordinates in the support. If $\theta$ meets the same threshold, $Q_i > 13 k \log \frac{d}{k}/n$ for all $i\in S$ with probability at least $1-C\exp(-C'k \log d)$ by union bound. Also, recall $Q_i > 13 k \log \frac{d}{k}/n$ for any $i \not\in S$ with  probability at most $C\exp(-C'k \log d)$ by Corollary \ref{cor:h0}. By union bound over all $d-k$ coordinates outside the support, the error probability is at most $d \cdot C \exp(-C' k \log d) \le C \exp (-C'' k \log d)$.
We showed that with high probability we exactly recover the support $S$ of $u$.

Runtime analysis is identical to that for the hypothesis test.
\end{proof}

\paragraph{Running time}
The runtime of both Algorithms \ref{algohyp} and \ref{algosupp} is $\tilde{O}(nd^2)$,\footnote{In what follows $\tilde{O}(\cdot)$ hides possible log and accuracy parameter $\varepsilon$ factors.} if we assume the $\slr$ black-box takes nearly linear time in input size, $\tilde{O}(nd)$, which is achieved by known existing algorithms. Note that computing the sample covariance matrix alone takes $O(nd^2)$ time, assuming one is using a naive implementation of matrix multiplication. For a broad comparison, we consider spectral methods and SDP-based methods, though there are methods that do not fall in either category. Spectral methods such as covariance thresholding or truncated power method have an iteration cost $O(d^2)$ due to operating on $d\times d$ matrices, and hence have total running time $\tilde{O}(d^2)$ ($\tilde{O}(\cdot)$ hiding precise convergence rate) in addition to the same $O(nd^2)$ initialization time. SDP-based methods in general take $\tilde{O}(d^3)$ time, the time taken by interior point methods to optimize. So overall, Algorithms \ref{algohyp} and \ref{algosupp} are competitive  choices for (single spiked) SPCA, at least theoretically, though they seem slower in practice.

\section{Experimental Setup}
\label{app:experiment}
We provide some futher details of the experimental setup, including selection of hyperparameters.

For the ``SPCA'' algorithm of \cite{zou2006sparse}, we used their direct implementation using an initialization with PCA (rather than the self-contained alternating minimization algorithm they present as an alternative). We also leave out the $\ell_2$ ridge penalty for convenience of implementation (their algorithm already performed very well in our experiments, so it was unnecessary to implement the full version).

For the truncated power method, we used the convergence criterion that the $l_2$ norm of the difference between eigenvectors from two consecutive iterations is less than $\epsilon = 0.01$.

For covariance thresholding, we tried various levels of parameter $\tau$, which controls the threshold for soft-thresholding, and indeed it performed best at \cite{deshpande2014sparse}'s recommended value of $\tau \approx 4$, which is the choice compared against.

For our $Q$-based algorithm SPCAvSLR, we used thresholded Lasso with $\lambda = 0.1$ where $\lambda$ controls the weight of the $\ell_1$ regularization. This is close to the recommended choice of $\lambda = 4\sigma\sqrt{\frac{\log d}{n}}$ from \cite{zhang2014lower} for our parameter setting.


\end{document}